\definecolor{myurlcolor}{rgb}{0.6,0,0}
\definecolor{mycitecolor}{rgb}{0,0,0.9}
\definecolor{myrefcolor}{rgb}{0,0,0.9}
\definecolor {processblue}{cmyk}{0.9,0.5,0,0}
\tikzstyle{simple}=[-,line width=2.000]
\tikzstyle{arrow}=[-,postaction={decorate},decoration={markings,mark=at position .5 with {\arrow{>}}},line width=1.100]
\tikzstyle{none}=[inner sep=-1pt]
\tikzstyle{species}=[circle,fill=none,draw=black,scale=1.0]
\tikzstyle{transition}=[rectangle,fill=none,draw=black,scale=1.15]
\tikzstyle{empty}=[circle,fill=none, draw=none]
\tikzstyle{inputdot}=[circle,fill=black,draw=black, scale=.5]
\tikzstyle{dot}=[circle,fill=black,draw=black]
\tikzstyle{bounding}=[circle,dashed, fill=none,draw=black, scale=9.00]
\tikzstyle{simple}=[-,draw=black,line width=1.000]
\tikzstyle{inarrow}=[-,draw=black,postaction={decorate},decoration={markings,mark=at position .5 with {\arrow{>}}},line width=1.000]
\tikzstyle{tick}=[-,draw=black,postaction={decorate},decoration={markings,mark=at position .5 with {\draw (0,-0.1) -- (0,0.1);}},line width=1.000]
\tikzstyle{inputarrow}=[->,draw=black, shorten >=.05cm]
\tikzset{main node/.style={circle,fill=blue!20,draw,minimum size=1cm,inner sep=0pt},}
\tikzstyle{construct}=[fill=white, draw=black, shape=circle]
\tikzstyle{universal}=[fill=black, draw=black, shape=circle]
\newcommand{\2}{\mathbf 2}
\newcommand{\GMV}{\G_{M,\V}}
\newcommand{\Aut}{\mathrm{Aut}}
\newcommand{\define}[1]{{\bf \boldmath{#1}}}
\newcommand{\catname}[1]{\mathcal{#1}}
\newcommand{\namedcat}[1]{\mathsf{#1}}
\newcommand{\bink}[1]{\binom{#1}{k}}
\newcommand{\C}{\catname C} 
\newcommand{\Alg}{\namedcat{Alg}}
\newcommand{\Cat}{\namedcat{Cat}}
\newcommand{\CMon}{\namedcat{CMon}}
\newcommand{\FinInj}{\namedcat{FinInj}}
\newcommand{\IMon}{\namedcat{IMon}}
\newcommand{\Inj}{\namedcat{Inj}}
\newcommand{\GMon}{\namedcat{GMon}}
\newcommand{\Mon}{\namedcat{Mon}}
\newcommand{\NM}{\namedcat{NetMod}}
\newcommand{\Quiv}{\namedcat{Quiv}} 
\renewcommand{\S}{\namedcat S}
\newcommand{\Set}{\namedcat{Set}}
\newcommand{\sGrph}{\namedcat{SimpGph}} 
\newcommand{\V}{\catname V} 
\newcommand{\KG}{\mathrm{KG}} 
\newcommand{\B}{\mathbb B}
\newcommand{\G}{\Gamma}
\newcommand{\n}{\mathbf n}
\newcommand{\m}{\mathbf m}
\newcommand{\SG}{\mathrm{SG}}
\renewcommand{\O}{\mathrm O}
\newcommand{\N}{\mathbb N}
\newcommand{\R}{\mathbb R}
\newcommand{\maps}{\colon}
\DeclareMathOperator{\colim}{colim}
\theoremstyle{plain}
\newtheorem{thm}{Theorem}
\newtheorem{lem}[thm]{Lemma}
\newtheorem{prop}[thm]{Proposition}
\newtheorem{defn}[thm]{Definition}
\newtheorem{expl}[thm]{Example}
\newtheorem*{defn*}{Definition}
\newtheorem{expl*}[thm]{Example}
\newtheorem{question}[thm]{Question}
\newtheorem*{thm*}{Theorem}
\newtheorem*{lem*}{Lemma}
\newtheorem*{prop*}{Proposition}
\newtheorem*{cor*}{Corollary}
\theoremstyle{remark}
\title{Noncommutative Network Models}
\author{Joe Moeller}
\date{\today}
\begin{document}

\maketitle

\begin{abstract}
    Network models, which abstractly are given by lax symmetric monoidal functors, are used to construct operads for modeling and designing complex networks. Many common types of networks can be modeled with simple graphs with edges weighted by a monoid. A feature of the ordinary construction of network models is that it imposes commutativity relations between all edge components. Because of this, it cannot be used to model networks with bounded degree. In this paper, we construct the free network model on a given monoid, which can model networks with bounded degree. To do this, we generalize Green's graph products of groups to pointed categories which are finitely complete and cocomplete.
\end{abstract}


\section{Network Models}
\label{sec:netmods}

One way to combine two graphs is to identify the vertices of one with some of the vertices of the other in a one-to-one way, then gluing the two graphs together at the identified vertices. 
We can decompose such a combination into a sequence of simpler operations of three types: disjoint union of any two graphs, gluing two graphs together which have the same vertex set, which we call \emph{overlay}, and permutation of vertices. 
In previous work, \emph{network models} were introduced to formally encode these operations \cite{netmods}. The algebras of a network operad can serve as tools for designing complex multi-agent networks. 
Network operads are constructed from network models, which are certain symmetric lax monoidal functors.
There is a functorial construction of a network model from a monoid, which we call the \emph{ordinary network model for weighted graphs}. 
In this paper, we provide a different construction in order to realize a larger class of networks as algebras of network operads, which we call the \emph{free varietal network model for weighted graphs}. In Section \ref{sec:commitment}, we give an example of a family of networks which cannot form an algebra for any ordinary network model for weighted graphs, but does for a varietal one.

The reader is assumed to be familiar with basic notions from category theory \cite{CWM}, especially symmetric monoidal categories and lax symmetric monoidal functors \cite{braided}. Let $\S$ be the \define{symmetric groupoid}, i.e.\ the category with objects $\n = \{1, \dots, n\}$ (including the empty set for $\mathbf{0}$) and bijections for morphisms. Let $\Mon$ denote the category of monoids. A \define{one-colored network model} is a symmetric lax monoidal functor 
\[
    (F, \Phi) \maps (\S, +) \to (\Mon, \times)
\] 
where $\Phi$ is the \emph{laxator} of $F$, i.e.\ a natural transformation with components 
\[
    \Phi_{x,y} \maps F \n \times F \m \to F(\n + \m).
\] 
We call the monoids $F(\n)$ the \define{constituent monoids} of the network model $F$. There is a more general notion of network model which replaces the category $\S$ with a free symmetric monoidal category. We do not consider this generalization here, so we always mean a one-colored network model when we say network model. Let $\NM$ denote the category of network models with monoidal natural transformations as morphisms.

Essentially, a network model is a family of monoids $\{M_n\}_{n \in \N}$ each with a group action of the corresponding symmetric group $S_n$, such that the product of any two embed into the one indexed by the sum of their indices equivariantly, i.e.\ in a way which respects the group action: $M_m \times M_n \hookrightarrow M_{m+n}$ .

\begin{expl}
    For a set $X$ and $k \in \N$, let $\bink{X}$ denote the set of $k$-element subsets of $X$. 
    A simple graph on $\n$ is a subset of $\binom{\n}{2}$. The set $\n$ is the set of vertices, and each two-element subset is an edge.
    Let $\SG(\n)$ be the monoid whose underlying set is $2^{\binom{\n}{2}}$, the set of all simple graphs with vertex set $\n$, and whose monoid operation is union.
    \[\scalebox{0.8}{
    \begin{tikzpicture}
    	\begin{pgfonlayer}{nodelayer}
    	    \node [style=none, scale = 1.2] () at (5,2) {$\cup$};
    	    \node [style=none, scale = 1.2] () at (9,2) {=};
    		\node [style=species]  (1) at (3.75, 2.75) {2};
    		\node [style=species]  (2) at (2.25, 2.75) {1};
    		\node [style=species]  (3) at (2.25, 1.25) {4};
    		\node [style=species]  (4) at (3.75, 1.25) {3};
    		\node [style=species]  (5) at (7.75, 2.75) {2};
    		\node [style=species]  (6) at (6.25, 2.75) {1};
    		\node [style=species]  (7) at (6.25, 1.25) {4};
    		\node [style=species]  (8) at (7.75, 1.25) {3};
    		\node [style=species]  (9) at (11.75, 2.75) {2};
    		\node [style=species]  (10) at (10.25, 2.75) {1};
    		\node [style=species]  (11) at (10.25, 1.25) {4};
    		\node [style=species]  (12) at (11.75, 1.25) {3};
    	\end{pgfonlayer}
    	\begin{pgfonlayer}{edgelayer}
    		\draw [style=simple] (2) to (1);
    		\draw [style=simple] (3) to (4);
    		\draw [style=simple] (6) to (5);
    		\draw [style=simple] (5) to (7);
    		\draw [style=simple] (10) to (9);
    		\draw [style=simple] (9) to (11);
    		\draw [style=simple] (11) to (12);
    	\end{pgfonlayer}
    \end{tikzpicture}}\]
    The group $S_n$ acts on $\SG(\n)$ by permuting the vertices, so we have a functor $\SG \maps \S \to \Mon$. 
    Disjoint union of graphs defines a family of monoid homomorphisms $\sqcup \maps \SG(\m) \times \SG(\n) \to \SG(\m+\n)$. 
    These maps form a natural transformation which acts as the laxator, making $(\SG, \sqcup) \maps (\S, +) \to (\Mon, \times)$ a lax symmetric monoidal functor, thus a network model.
\end{expl}

The simple graph network model is the motivating example for network models. Since the operation in the constituent monoids of $\SG$ are defined by union, we use $\cup$ to denote the operation in all the constituent monoids for all network models. Since the elements of the monoids $\SG(\n)$ are graphs, for a general network model $F$, the elements of the monoids $F(\n)$ are called \define{$F$-networks}.
Since a network model is a lax symmetric monoidal functor, we can apply a symmetric monoidal variant of the Grothendieck construction to obtain a symmetric monoidal category. From this, we obtain the \emph{underlying operad}. 
The main result on network models is that given any network model $F \maps \S \to \Mon$, there is an $\N$-typed operad $O_F$ with $F$-networks as its operations \cite{netmods}.

This gives us a method of obtaining operads that are useful for modeling complex networks of various sorts. We start with a suitable network model, and then apply this theorem to obtain an operad. 
This leads to the question: which kinds of network can be described by network models?

One large class of network models comes from monoids \cite{netmods}.  Indeed, for any monoid $M$ there is a network model for which the networks are simple graphs weighted by $M$.
In the simple graphs example, $\SG(\n)$ is the monoid whose underlying set was $2^{\binom{\n}{2}}$ and whose monoid operation was union. 
This could be said more succinctly by letting $\B$ be the boolean monoid, $(\{T, F\}, \vee)$, and then defining $\SG(\n) = \B^{\binom{\n}{2}}$. 
We interpret an element of $\SG(\n)$, which is a function $g \maps \binom{\n}{2} \to \B$, as a graph on $\n$ with an edge between $u \in \n$ and $v \in \n$ if and only if $g(\{u,v\}) =T$.
For a given monoid, let $\G_M$ be the network model defined by $\G_M(\n) = M^{\binom{\n}{2}}$, and everything else defined as with simple graphs. 
As with $\SG(\n)$, an element of $\G_M(\n)$ should be thought of as a simple graph, but with edges weighted by values in $M$. This construction is functorial: 

\begin{thm}[c.f.\ \cite{netmods}, Thm.\ 22]
\label{thm:ordinary}
    There is a functor $\G \maps \Mon \to \NM$ sending any monoid $M$ to the network model $\G_M$ and any homomorphism of monoids $f \maps M \to M'$ to the morphism of network models $\G_f \maps \G_M \to \G_{M'}$. The network model $\G_M$ is called the \define{ordinary network model for $M$-weighted graphs} or the \define{ordinary $M$ network model}.
\end{thm}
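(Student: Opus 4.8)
The plan is to realise $\G_M$ as a composite of two symmetric monoidal functors, so the network model axioms follow formally, and then to read off the action of $\G$ on morphisms by whiskering. Throughout, write $\FinInj$ for the category of finite sets and injections, equipped with disjoint union $\sqcup$ as a symmetric monoidal structure.

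First I would check that $\binom{-}{2}$ is a symmetric lax monoidal functor $(\S, +) \to (\FinInj, \sqcup)$. On objects it sends $\n$ to its set of two-element subsets; on a bijection, the induced bijection of two-element subsets (in particular an injection); its laxator $\binom{\n}{2} \sqcup \binom{\m}{2} \to \binom{\n + \m}{2}$ is the evident inclusion, which is not surjective since it omits the mixed pairs having one vertex in each block; and its unit comparison $\emptyset \to \binom{\mathbf{0}}{2}$ is an identity. Using that $(\S, +)$ is a strict monoidal category, the associativity and unit coherence diagrams, together with compatibility with the two symmetries, all reduce to the statement that the relevant composites coincide as subsets of (or reindexings between) sets of two-element subsets of $\n + \m + \mathbf{p}$; this is a short combinatorial verification.

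Next I would introduce the functor $M^{(-)} \maps \FinInj \to \Mon$ sending a finite set $X$ to the monoid $M^X$ with pointwise multiplication, and sending an injection $\phi \maps X \hookrightarrow Y$ to the ``extension by the identity'' homomorphism $M^X \to M^Y$ that transports $g$ along $\phi$ and assigns the unit $e \in M$ to every element outside $\mathrm{im}\,\phi$. One checks that this preserves the monoid operation and respects composition of injections, and that the canonical isomorphisms $M^{X \sqcup Y} \cong M^X \times M^Y$ and $M^{\emptyset} \cong \mathbf{1}$ exhibit $M^{(-)}$ as a symmetric strong monoidal functor $(\FinInj, \sqcup) \to (\Mon, \times)$. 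Since a composite of symmetric lax monoidal functors is again one, $\G_M := M^{(-)} \circ \binom{-}{2}$ is a network model. Unwinding the composite recovers the stated description: $\G_M(\n) = M^{\binom{\n}{2}}$, the $S_n$-action permutes the coordinates indexed by two-element subsets of $\n$, and the laxator $\Phi_{\n, \m}$ sends $(g, h)$ to the $M$-weighted graph on $\n + \m$ that restricts to $g$ on $\n$ and to $h$ on $\m$ and carries the weight $e$ on every mixed pair --- that is, disjoint union of $M$-weighted graphs, exactly as in the $\SG$ example.

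Finally, a monoid homomorphism $f \maps M \to M'$ induces, for every finite set $X$, the homomorphism $M^X \to (M')^X$ given by postcomposition with $f$; since $f$ preserves the unit, these are natural in $X \in \FinInj$ and compatible with the monoidal comparison maps, hence assemble into a monoidal natural transformation $M^{(-)} \To (M')^{(-)}$, and this assignment is visibly functorial in $f$. Whiskering with $\binom{-}{2}$ produces a monoidal natural transformation $\G_f \maps \G_M \To \G_{M'}$, i.e.\ a morphism in $\NM$, and whiskering preserves identities and composites, so $\G \maps \Mon \to \NM$ is a functor. Essentially all of the content lies in selecting this factorisation; afterwards the only real work is verifying that ``extend by the identity'' is functorial and symmetric monoidal on $\FinInj$, and that $\binom{-}{2}$ respects the symmetry, and I expect those two routine checks to be the main points to pin down.
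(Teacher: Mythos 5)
Your proposal is correct, but note that the paper does not actually prove this theorem: it is stated with a citation to \cite{netmods} (Thm.\ 22), where the network-model axioms for $\G_M$ are verified directly on the explicit description $\G_M(\n) = M^{\binom{\n}{2}}$. Your route is a genuine repackaging of that verification: by factoring $\G_M$ as the strong symmetric monoidal functor $M^{(-)} \maps (\FinInj, \sqcup) \to (\Mon, \times)$ precomposed with the lax symmetric monoidal functor $\binom{-}{2} \maps (\S, +) \to (\FinInj, \sqcup)$, you reduce everything to two standard checks plus the general fact that lax symmetric monoidal functors compose, and the action on monoid homomorphisms falls out by whiskering a monoidal natural transformation. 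What this buys is economy and reusability: the hands-on approach of the original reference must separately verify that each $\Phi_{\n,\m}$ is a homomorphism, that it is natural and equivariant, and that the coherence hexagons commute, whereas in your factorization these are consequences of the (easier) monoidality of the two factors. It is also consonant with machinery this paper develops anyway --- Lemma \ref{lem:chooselax} establishes exactly the lax symmetric monoidality of $\binom{-}{2}$ on $(\FinInj, +, \emptyset)$ for the Kneser-graph construction, so your first ingredient is already present; only the ``extend by the unit'' functor $M^{(-)}$ and its strong monoidal structure are new, and your verification of those (including the point that naturality of $f_* \maps M^X \to (M')^X$ uses $f(e) = e'$) is sound. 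The one thing worth making explicit if you write this up is the symmetry axiom for $\binom{-}{2}$, i.e.\ that the block-swap bijection on $\n + \m$ intertwines the two inclusions $\binom{\n}{2} \sqcup \binom{\m}{2} \to \binom{\n+\m}{2}$ and $\binom{\m}{2} \sqcup \binom{\n}{2} \to \binom{\m+\n}{2}$; you flag it as a routine check, and it is.
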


This construction is designed to model networks which carry information on the edges. 
For example, with $\N$ a monoid under addition, $\G_\N$ is a network model for loopless undirected multigraphs where overlaying is given by adding the number of edges. A similar example is $\G_\B = \SG$. There is a monoid homomorphism $\N \to \B$ which sends all but $0$ to $T$. This induces a map of network models $\G_\N \to \G_\B$. 
Essentially this map reduces the information of a graph from the number of connections between each pair of vertices to just the existence of any connection.

\begin{expl}[\bf Algebra for range-limited communication]
    Consider a communication network where each node represents a boat and an edge between two nodes represents a working communication channel between the corresponding boats. Some forms of communication are restricted by the distance between those communicating. Assume that there is a known maximal distance over which our boats can communicate. Networks of this sort form an algebra of the simple graphs operad in the following way.
    
    Let $(X,d)$ be a metric space, and $0 \leq L \in \R$. Our boats will be located at points in this space. The operad $\O_\SG$ has an algebra $(A_{d,L}, \alpha)$ defined as follows. The set $A_{d,L}(\n)$ is the set of pairs $(h,f)$ where $h \in \SG(\n)$ is a simple graph and $f \maps \n \to X$ is a function such that if $\{v_1,v_2\}$ is an edge in $g$ then $d(f(v_1),f(v_2)) \leq L$. The number $L$ represents the maximal distance over which the boat's communication channels operate. Notice that this condition does not demand that all connections within range must be made. An operation $(\sigma, g)\in \O_\SG(\n_1, \dots, \n_k; \n)$ acts on a $k$-tuple $(h_i, f_i) \in A_{d,L}(\n_i)$ by 
    \[
        \alpha(\sigma, g)((h_1, f_1), \dots, (h_k, f_k)) = (g \cup \sigma(h_1 \sqcup \dots \sqcup h_k), f_1 \sqcup \dots \sqcup f_k).
    \] 
    Elements of this algebra are simple graphs in the space $X$ with an upper limit on edge lengths. When an operation acts on one of these, it tries to put new edges into the graph, but fails to when the range limit is exceeded \cite{netmods}.
\end{expl}

A characteristic of the construction given in Theorem \ref{thm:ordinary} is that elements of the resulting monoids that correspond to different edges automatically commute with each other. 
For example, for a monoid $M$, the fourth constituent monoid of the ordinary $M$ network model is $\G_M(4) = M^6$.
Then the element $(m_1, 0, 0, 0, 0, 0)$ represents a graph with one edge with weight $m_1 \in M$, the element $(0, m_2, 0, 0, 0, 0)$ represents a graph with a different edge with weight $m_2 \in M$, and 
\begin{align*}
    (m_1, 0, 0, 0, 0, 0) \cup (0, m_2, 0, 0, 0, 0) 
    &= (m_1, m_2, 0, 0, 0, 0) 
    \\&= (0, m_2, 0, 0, 0, 0) \cup (m_1, 0, 0, 0, 0, 0).
\end{align*}

This commutativity between edges means that networks given by ordinary network models cannot record information about the order in which edges were added to it. 
The ability to record such information about a network is desirable, for example, if one wishes to model networks which have a limit on the number of connections each agent can make to other agents. 

The \define{degree} of a vertex in a simple graph is the number of edges which include that vertex. 
The \define{degree} of a graph is the maximum degree of its vertices.
A graph is said to have \define{degree bounded by $k$}, or simply \define{bounded degree}, if the degree of each vertex is less than or equal to $k$.
Let $B_k(\n)$ denote the set of networks with $\n$ vertices and degree bound $k$. 
One might guess that the family of such networks could form an algebra for the simple graphs operad. 

\begin{question}
\label{question}
    Does the collection of networks of bounded degree form an algebra of a network operad? If so, is there such an algebra which is useful in applications?
\end{question}

Specifically, can networks of bounded degree form an algebra of $\O_\SG$, the simple graph operad? Setting two graphs next to each other will not change the degree of any of the vertices. Overlaying them almost definitely will, which makes defining an action of $\SG(\n)$ on $B_k(\n)$ less obvious. 

Ordinary network models are not sufficient to model this type of network because the graph monoids it produced could not remember the order that edges were added into a network. Even if $M$ is a noncommutative monoid, since $\G_M$ is a product of several copies of $M$, one for each pair of vertices, it cannot distinguish the order that two different edges touching $v_1$ were added to a network if their other endpoints are different. 

Instead of taking the product of $\binom{n}{2}$ copies of $M$, we consider taking the coproduct, so as not to impose any commutativity relations between the edges.
Since the lax structure map $\sqcup \maps F(\m) \times F(\n) \to F(\m+\n)$ associated to a network model $F \maps \S \to \Mon$ must be a monoid homomorphism, then \[(a \sqcup b) \cup (c \sqcup d) = (a \cup c) \sqcup (b \cup d).\] In particular, if we let $\emptyset$ denote the the identity of $F(\n)$ for any $\n$, then 
\begin{align*}
    (a \sqcup \emptyset) \cup (\emptyset \sqcup b) 
    &= (a \cup \emptyset) \sqcup (\emptyset \cup b)
    \\&= (\emptyset \cup a) \sqcup (b \cup \emptyset)
    \\&= (\emptyset \sqcup b) \cup (a \sqcup \emptyset).
\end{align*}
This is reminiscent of the Eckmann-Hilton argument, but notice that the domains of the operations $\cup$ and $\sqcup$ are not the same. This equation says that elements which correspond to disjoint edges must commute with each other. Simply taking the coproduct of $\binom{n}{2}$ copies of $M$ cannot give the constituent monoids of a network model.

For a collection of monoids $\{M_i\}_{i \in I}$, elements of the product monoid which come from different components always commute with each other. In the coproduct, they never do. A \emph{graph product} (in the sense of Green \cite{Green}) of such a collection allows one to impose commutativity between certain components and not others by indicating such relations via a simple graph. The calculation above shows that the constituent monoids of a network model must satisfy certain partial commutativity relations. We use graph products to construct a family of monoids with the right amount of commutativity to both answer question \ref{question} and satisfy the conditions of being a network model. The following theorems are proven in Section \ref{sec:funnetmod}.

\begin{thm*}
    The functor $\NM \to \Mon$ defined by $F \mapsto F(\2)$ has a left adjoint $\G_{-,\Mon} \maps \Mon \to \NM$.
\end{thm*}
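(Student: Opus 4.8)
The plan is to build $\G_{M,\Mon}$ explicitly out of graph products of monoids, check that it is a network model, and then verify the universal property of a left adjoint directly, paying attention to the symmetric-group action on the level-$\2$ constituent. Recall that for a simple graph $G$ on a vertex set $V$ and a family of monoids $\{N_v\}_{v \in V}$, the graph product is the quotient of the coproduct $\coprod_v N_v$ by the relations forcing the images of $N_v$ and $N_w$ to commute elementwise whenever $v$ and $w$ are adjacent; a homomorphism out of it is the same thing as a family of homomorphisms $N_v \to P$ whose images commute pairwise along the edges of $G$. This is Green's construction in the case of monoids --- a special case of the graph products over pointed, finitely complete and cocomplete categories developed below.

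For $\n \in \S$, let $E_\n = \{(i,j) \in \n \times \n : i \ne j\}$ be the set of directed edges on $\n$ --- equivalently the set of injections $\2 \hookrightarrow \n$ --- and let $K_\n$ be the simple graph on $E_\n$ in which $(i,j)$ and $(k,l)$ are adjacent exactly when $\{i,j\} \cap \{k,l\} = \emptyset$. Define $\G_{M,\Mon}(\n)$ to be the graph product over $K_\n$ of one copy of $M$ for each directed edge. Since $S_n$ acts on $E_\n$ by $\sigma\cdot(i,j) = (\sigma i,\sigma j)$, preserving adjacency in $K_\n$, it acts on $\G_{M,\Mon}(\n)$, making $\G_{M,\Mon}$ a functor $\S \to \Mon$. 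The block inclusion $E_\m \sqcup E_\n \hookrightarrow E_{\m+\n}$ identifies the full subgraph of $K_{\m+\n}$ on its image with the join $K_\m \ast K_\n$, because every directed edge inside the first block is disjoint from every directed edge inside the second; since a graph product over a join of two graphs is the product of the two graph products (an easy consequence of the universal properties), this inclusion induces a monoid homomorphism $\Psi_{\m,\n}\maps \G_{M,\Mon}(\m) \times \G_{M,\Mon}(\n) \to \G_{M,\Mon}(\m+\n)$, and $\Psi$ serves as the laxator. Note that its domain is genuinely a product, as the definition of a network model requires. That $(\G_{M,\Mon},\Psi)$ satisfies the lax symmetric monoidal coherence axioms, and that the construction is functorial and natural in $M$, reduces to routine compatibilities among the block inclusions, the joins and the $S_n$-actions, together with the fact that $\G_{M,\Mon}(\mathbf 0)$ and $\G_{M,\Mon}(\mathbf 1)$ are trivial because $E_{\mathbf 0}$ and $E_{\mathbf 1}$ are empty.

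Because $E_\2 = \{(1,2),(2,1)\}$ carries no edge of $K_\2$, we get $\G_{M,\Mon}(\2) = M \ast M$, the free product, with $S_2$ interchanging the two factors; this is the free $S_2$-monoid on $M$, and it must appear because a morphism of network models is $S_2$-equivariant at level $\2$ (equivalently, $F \mapsto F(\2)$ factors through the forgetful functor from $S_2$-monoids to monoids, whose left adjoint is $N \mapsto N \ast N$). Let the unit $\eta_M\maps M \to \G_{M,\Mon}(\2)$ be the inclusion of the $(1,2)$-factor. Given a network model $F$ with laxator $\Phi$ and a homomorphism $g\maps M \to F(\2)$, define for each $\n$ and each $f = (i,j) \in E_\n$ a homomorphism $\beta_f\maps M \to F(\n)$ by $\beta_f = F(\tau_f)\circ\Phi_{\2,\mathbf{n-2}}(g(-),\emptyset)$, where $\tau_f \in S_n$ sends $1 \mapsto i$, $2 \mapsto j$, and $3,\dots,n$ to the elements of $\n \setminus \{i,j\}$ in increasing order. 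When $\{i,j\}\cap\{k,l\} = \emptyset$, the elements $\beta_{(i,j)}(m)$ and $\beta_{(k,l)}(m')$ are supported on disjoint blocks of vertices, hence commute in $F(\n)$ by exactly the Eckmann--Hilton-style computation given before the statement: after relabelling so the two blocks come first, $\Phi(a,\emptyset)\cup\Phi(\emptyset,b) = \Phi(a,b) = \Phi(\emptyset,b)\cup\Phi(a,\emptyset)$. Thus $\{\beta_f\}_{f \in E_\n}$ induces, by the universal property of the graph product, a homomorphism $\alpha_\n\maps \G_{M,\Mon}(\n) \to F(\n)$, and one checks on generators that $\alpha = (\alpha_\n)$ is $S_n$-equivariant and compatible with the laxators --- equivariance uses that $\emptyset \in F(\mathbf{n-2})$ is fixed by permutations of $\{3,\dots,n\}$, so the choice of $\tau_f$ matters only up to such a permutation --- and that $\alpha_\2 \circ \eta_M = g$, since $\Phi_{\2,\mathbf 0}(-,\emptyset)$ is the identity. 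For uniqueness, $\G_{M,\Mon}(\n)$ is generated by the images of its graph-product structure maps $M \to \G_{M,\Mon}(\n)$, one per directed edge, each obtained from $\eta_M$ by one application of a laxator followed by an element of $S_n$; so naturality and laxator-compatibility force any morphism $\G_{M,\Mon} \To F$ to be determined by its restriction along $\eta_M$. This yields the natural bijection $\Hom_{\NM}(\G_{M,\Mon},F) \cong \Hom_{\Mon}(M,F(\2))$.

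The one conceptual step is recognizing that the level-$\2$ constituent must be $M \ast M$ rather than $M$ itself; after that, the content is simply that disjoint directed edges commute and nothing else does, which is precisely what the laxator forces. The bulk of the work is bookkeeping: the coherence axioms for $(\G_{M,\Mon},\Psi)$, the equivariance and laxator-compatibility of $\alpha$, and the generation claim used for uniqueness, each of which must be traced back through the generators and relations of the graph products and through the coherence data of $F$. I do not expect a genuine obstacle there, only length.
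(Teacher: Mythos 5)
Your construction is internally coherent, but it does not produce the paper's $\G_{M,\Mon}$, and the discrepancy is the substantive point of comparison. The paper indexes its Green product by the \emph{unordered} pairs $\binom{\n}{2}$ (the Kneser graph $KG_{n,2}$), so its $\G_{M,\Mon}(\2)$ is a single copy of $M$ on which the transposition acts trivially and the unit of the adjunction is the identity. You index by ordered pairs and get $M \ast M$ at level $\2$ with $S_2$ swapping the factors. These are non-isomorphic network models, so at most one can be the left adjoint of $F \mapsto F(\2)$. Your version is the one that matches the statement as literally written: a morphism of network models is $S_2$-equivariant at level $\2$, so any $\alpha_\2$ out of a trivial-action $M$ lands in $F(\2)^{S_2}$, and a homomorphism $g \maps M \to F(\2)$ whose image is not fixed by the involution (take $F$ to be the network model of directed simple graphs, $F(\2) = \B \times \B$ with the swap, $g(T) = (T,F)$) cannot be of the form $\alpha_\2 \circ \eta$. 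The same issue appears inside the paper's own counit, where $c_{i,j}$ and $c_{j,i}$ differ by $F((i\;j))$, so the cocone over the unordered Kneser diagram is well defined only when $F((1\;2))$ fixes the image of $\Phi_{\2,\n-\2}(-,e)$. In short, the paper's $\G_{-,\Mon}$ is left adjoint to $F \mapsto F(\2)$ regarded as valued in $S_2$-monoids and precomposed with ``give $M$ the trivial action'' (equivalently it represents $F \mapsto \Hom_\Mon(M, F(\2)^{S_2})$), whereas your free-product correction --- which you rightly identify as the free $S_2$-monoid on $M$ --- is what adjointness over $\Mon$ itself requires.

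Beyond that divergence, your argument parallels the paper's: the same Eckmann--Hilton computation forces disjoint edges to commute and nothing more, and the transpose of $g$ is built from $F((1\;i)(2\;j)) \circ \Phi_{\2,\n-\2}(-,\emptyset)$ exactly as in the paper's counit. Your observation that the block inclusion realizes a join of graphs, so that the laxator is automatically a homomorphism out of the product, is a cleaner route than the paper's string-diagrammatic verification. One caution: the rest of the paper (the identification $\G_{M,\CMon} \cong \G_M$, and the graphic-monoid examples of Section 5) is built on the unordered construction --- in $\CMon$ your ordered-pair version would give $M \times M$ at level $\2$ rather than $\G_M(\2) = M$ --- so if you adopt the ordered model you are proving a corrected form of the theorem, not the statement the later sections depend on.
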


The fact that this construction is a left adjoint tells us that the network models constructed are ones in which the only relations that hold are those that follow from the defining axioms of network models. 

A \emph{variety} of monoids is the class of all monoids satisfying a given set of identities. 
For example, $\Mon$ has subcategories $\CMon$ of commutative monoids and $\GMon$ of graphic monoids which are varieties of monoids satisfying the equations $ab=ba$ and $aba=ab$ respectively. Given a variety of monoids $\V$, let $\NM_\V$ be the subcategory of $\NM$ consisting of $\V$-valued network models. We recreate graph products in varieties of monoids to obtain a more general result.

\begin{thm*}
    The functor $\NM_V \to \V$ defined by $F \mapsto F(\2)$ has a left adjoint $\G_{-,\V} \maps \V \to \NM_V$.
\end{thm*}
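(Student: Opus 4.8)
The plan is to construct the left adjoint $\G_{-,\V}$ explicitly as a graph product internal to $\V$, generalizing the construction behind the preceding theorem (which is the case $\V=\Mon$), and then to verify the hom-set bijection directly. The first thing to record is that a variety $\V$ of monoids is exactly the kind of category to which the graph-product machinery of Section~\ref{sec:funnetmod} applies: it is pointed, the trivial monoid being a zero object, and, being a variety of algebras, it is complete and cocomplete. Thus for any simple graph $G$ with vertex set $I$ and any family $\{M_i\}_{i\in I}$ in $\V$ there is a graph product $\bigcirc_G\{M_i\}$ in $\V$ — the quotient of the $\V$-coproduct $\coprod_{i\in I}^{\V}M_i$ by the relations forcing the images of $M_i$ and $M_j$ to commute for every edge $ij$ of $G$ — functorial in both $G$ (along graph morphisms) and the family, and satisfying the usual property that the graph product over a join of two graphs is the $\V$-product of the two graph products.

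I would then define $\G_{M,\V}$ as follows. For each $\n$ let $\Gamma_\n$ be the graph whose vertices are the edge-positions available on $\n$ and in which two positions are joined precisely when they are disjoint as edges, so that $\Gamma_\n$ records exactly the commutations forced by the partial-commutativity identity in the excerpt; the group $S_n$ acts on $\Gamma_\n$ by graph automorphisms. Put $\G_{M,\V}(\n):=\bigcirc_{\Gamma_\n}\{M\}$, one copy of $M$ per position. Functoriality of the graph product in its graph argument makes $\G_{M,\V}$ a functor $\S\to\V$, and its constituent monoids lie in $\V$ by construction. For the laxator, note that the full subgraph of $\Gamma_{\m+\n}$ spanned by the positions internal to one of the two blocks is the join of $\Gamma_\m$ and $\Gamma_\n$; the inclusion of this subgraph therefore yields a homomorphism $\Phi_{\m,\n}\maps\G_{M,\V}(\m)\times\G_{M,\V}(\n)\to\G_{M,\V}(\m+\n)$. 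Naturality of $\Phi$ and the coherence axioms (associativity, unit, compatibility with the symmetries) follow from the functoriality and associativity of the graph product, so $\G_{M,\V}$ is an object of $\NM_\V$, evidently functorial in $M$.

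To produce the adjunction $\Hom_{\NM_\V}(\G_{M,\V},F)\cong\Hom_\V(M,F(\2))$, natural in $M$ and $F$, I would take as unit the homomorphism $\eta_M\maps M\to\G_{M,\V}(\2)$ including $M$ as the copy attached to a fixed position of $\Gamma_\2$, and let the bijection carry $\alpha$ to $\alpha_\2\circ\eta_M$. For the inverse, given $\phi\maps M\to F(\2)$ in $\V$ and a position $e$ of $\Gamma_\n$, I would build a homomorphism $\iota_e\maps F(\2)\to F(\n)$ by iterating the laxator of $F$ to place a two-vertex network at the endpoints of $e$ and identity elements in the remaining slots, and then transporting along a permutation that carries the standard pair onto $e$; laxator coherence makes $\iota_e$ independent of these choices, and the Eckmann-Hilton-flavored identity $(a\sqcup\emptyset)\cup(\emptyset\sqcup b)=(\emptyset\sqcup b)\cup(a\sqcup\emptyset)$ shows that the images of $\iota_e$ and $\iota_{e'}$ commute in $F(\n)$ whenever $e$ and $e'$ are joined in $\Gamma_\n$. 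Hence the homomorphisms $\iota_e\circ\phi\maps M\to F(\n)$ respect the defining relations of $\G_{M,\V}(\n)=\bigcirc_{\Gamma_\n}\{M\}$, and the universal property of the $\V$-graph product assembles them into a unique $\hat\phi_\n\maps\G_{M,\V}(\n)\to F(\n)$; one then checks that the $\hat\phi_\n$ are $S_n$-equivariant and compatible with the laxators (so that $\hat\phi$ is a morphism in $\NM_\V$), that $\hat\phi_\2\circ\eta_M=\phi$, and that $\hat\phi$ is the unique morphism with this property — the last point because the position-copies of $M$ generate $\G_{M,\V}(\n)$.

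I expect the main obstacle to be the coherence bookkeeping in the last paragraph: verifying that $\iota_e$ is well defined and that $\hat\phi$ is genuinely a monoidal natural transformation demands a careful manipulation of the laxator coherence diagrams together with the symmetry isomorphisms of $(\S,+)$. The passage from $\Mon$ to a general variety $\V$, on the other hand, is essentially formal: every step above is expressed through coproducts, products, quotients, and the zero object of $\V$, all of which exist with the required universal properties because $\V$ is a variety, so the argument for $\Mon$ carries over with only cosmetic changes and recovers the preceding theorem as the case $\V=\Mon$.
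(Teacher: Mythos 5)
Your proposal is correct and follows essentially the same route as the paper: the unit is the identity (since $\G_{M,\V}(\2)=M$), and the transpose of $\phi\maps M\to F(\2)$ is assembled from the single-edge placement maps $F(\2)\to F(\n)$ built from the laxator $\Phi_{\2,\n-\2}$ and the $S_n$-action, with the network-model axiom supplying commutativity for disjoint positions so that the universal property of the varietal Green product applies. The only differences are presentational — you verify the hom-set bijection where the paper checks the triangle identities, and you obtain the laxator from the join decomposition of the Kneser subgraph rather than the paper's string-diagram computation — but the underlying construction is the same.
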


In particular, if $\V = \CMon$, since products and coproducts are the same in $\CMon$, the ordinary $M$ network model and the $\CMon$ varietal $M$ network model are also the same. Note that this does not indicate that $\G_{-,\V}$ is a complete generalization of $\G_-$ from Theorem \ref{thm:ordinary}, since $\G_M$ is not an example of $\G_{-,\V}$ when $M$ is not commutative. 

The ordinary construction for a network model given a monoid $M$ has constituent monoids given by finite cartesian powers of $M$. To include the networks described in Question \ref{question} into the theory of network models, we must construct a network model from a given monoid which does not impose as much commutativity as the ordinary construction does, specifically among elements corresponding to different edges. The first attempt at a solution is to use coproducts instead of products. However, in this section we saw that we cannot create the constituent monoids of a network model simply by taking them to be coproducts of $M$ instead of products. There must be some commutativity between different edges, specifically between edges which do not share a vertex. 

Given a monoid $M$, we want to create a family of monoids indexed by $\N$, the $n$th of which looks like a copy of $M$ for each edge in the complete graph on $\n$, has minimal commutativity relations between these edge components, but does have commutativity relations between disjoint edges. Partial commutativity like this can be described with Green's graph products, which we describe in Section \ref{sec:graphprod}. The type of graph which describes disjointness of edges in a graph as we need is called a \emph{Kneser graph}, which we describe in Section \ref{sec:kneser}. Besides concerning ourselves with relations between edge components, sometimes we also want the constituent monoids in a network model to obey certain relations which $M$ obeys. In Section \ref{sec:varmon} we describe \emph{varieties of monoids} and a construction which produces monoids in a chosen variety. In Section \ref{sec:funnetmod} we prove this construction is functorial, and in Section \ref{sec:commitment} we use this construction to give a positive answer to Question \ref{question}.

\section{Graphs}

This section is dedicated to constructing the constituent monoids for the network models we want. In this section there are two different ways that graphs are being used. It is important that the reader does not get these confused. One way is the graphs which are elements of the constituent monoids of the network models we are constructing. The other way we use graphs is to index the \emph{Green product} (which we define in \ref{sec:graphprod}) to describe commutativity relations in the constituent monoids of the network models we are constructing. 

A network model is essentially a family of monoids with properties similar to the simple graphs example, so we think of the elements of these monoids as graphs, and we think of the operation as overlaying the graphs. 
These monoids have partial commutativity relations they must satisfy, as we see in Section \ref{sec:netmods}.
The graphs we use in the Green product, the Kneser graphs, are there to describe the partial commutativity in the constituent monoids. 

\subsection{Green Products}
\label{sec:graphprod}

Given a family of monoids $\{M_v\}_{v\in V}$ indexed by a set $V$, there are two obvious ways to combine them to get a new monoid, the product and the coproduct.
From an algebraic perspective, a significant difference between these two is whether or not elements that came from different components commute with each other. In the product they do. In the coproduct they do not. 
\emph{Green products}, or commonly \emph{graph products}, of groups were introduced in 1990 by Green \cite{Green}, and later generalized to monoids by Veloso da Costa \cite{Veloso}.
The idea provides something of a sliding scale of relative commutativity between components. We follow \cite{Fountain} in the following definitions.

By a \define{simple graph} $G=(V, E)$, we mean a set $V$ which we call the set of vertices, and a set $E \subseteq \binom{V}{2}$, which we call the set of edges. A \define{map of simple graphs} $f \maps (V, E) \to (V', E')$ is
a function $f \maps V \to V'$ such that if $\{u, v\} \in E$ then $\{f(u), f(v)\} \in E'$. 
Let $\sGrph$ denote the category of simple graphs and maps of simple graphs.

For a set $V$, a family of monoids $\{M_v\}_{v\in V}$, and a simple graph $G = (V, E)$, the \define{$G$ Green product} (or simply \define{Green product} when unambiguous) of $\{M_v\}_{v\in V}$, denoted $G(M_v)$, is 
\[
    G(M_v) = \left( \coprod_{v\in V} M_v \right) /R_G
\]
where $R_G$ is the congruence generated by the relation 
\[
    \{ (m n, n m) |\, m \in M_v, n\in M_u, u, v \text{ are adjacent in }G \}
\]
where the operation in the free product is denoted by concatenation. 
If $G$ is the complete graph on $n$ vertices, then $G(M_v) \cong \prod M_v$. If $G$ is the $n$-vertex graph with no edges, then $G(M_v) \cong \coprod M_v$. 

We call each $M_v$ a \define{component} of the Green product.
Elements of $G(M_v)$ are written as \define{expressions} as in the free product, $m^{v_1}_1\dots m^{v_k}_k \in G(M_v)$ where the superscript indicates that $m_i \in M_{v_i}$.
We often consider Green products of several copies of the same monoid, so this notation allows one to distguish elements coming from different components of the product, even if they happen to come from the same monoid. 
The intention and result of the imposed relations is that for an expression $m^{v_1}_1\dots m^{v_k}_k$ of an element, if there is an $i$ such that
$\{v_i, v_{i+1}\} \in E$, then we can rewrite the expression by replacing $m^{v_i}_i m^{v_{i+1}}_{i+1}$ with $m^{v_{i+1}}_{i+1} m^{v_i}_i$.
This move is called a \define{shuffle}, and two expressions are called \define{shuffle equivalent} if one can be obtained from the other by a sequence of shuffles. 
An expression $m^{v_1}_1\dots m^{v_k}_k$ is \define{reduced} if whenever $i<j$ and $v_i = v_j$, there exists $l$ with $i<l<j$ and $\{v_i, v_l\} \notin E$.
If two reduced expressions are shuffle equivalent, they are clearly expressions of the same element. The converse is also true.

\begin{thm}[\cite{Fountain}, Thm.\ 1.1]
\label{thm:shuffle}
    Every element of $M$ is represented by a reduced expression. Two reduced expressions represent the same element of $M$ if and only if they are shuffle equivalent.
\end{thm}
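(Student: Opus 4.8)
The plan is to prove the two assertions separately: existence of reduced expressions by a short induction on length, and the uniqueness statement by a van der Waerden style argument. Note first that the "if" direction of the second assertion is immediate, since a shuffle replaces a subword $m^v n^u$ by $n^u m^v$ with $u,v$ adjacent, which is an instance of a generating relation of $R_G$; so only the "only if" direction is at issue, and for that I would build an action of $M := G(M_v)$ on the set of shuffle-equivalence classes of reduced expressions which is fine enough to separate distinct classes.

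For existence, induct on the length $k$ of an expression $m^{v_1}_1\cdots m^{v_k}_k$ representing a given element. If some $m_i$ is the identity of $M_{v_i}$, delete that letter to get a shorter expression for the same element. Otherwise, if the expression is not reduced, pick $i<j$ with $v_i=v_j$ and $\{v_i,v_l\}\in E$ for every $l$ with $i<l<j$; then $m^{v_j}_j$ commutes with each intervening letter, so finitely many shuffles bring it immediately to the right of $m^{v_i}_i$, and replacing $m^{v_i}_i m^{v_j}_j$ by $(m_im_j)^{v_i}$ produces an expression for the same element of strictly smaller length. Iterating terminates at a reduced expression.

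For the "only if" direction, let $\Sigma$ be the set of shuffle-equivalence classes of reduced expressions. Given a reduced expression $w$ and an element $m\in M_v$, define a class $w\cdot m^v$ as follows: if $m$ is the identity, set $w\cdot m^v=[w]$; otherwise form $w\,m^v$, shuffle the final letter $m^v$ as far to the left as it will go (it may pass a letter $m^{v_l}_l$ exactly when $v_l$ is adjacent to $v$), and if it thereby becomes adjacent to a letter $m^{v}_i$ carrying the same vertex, replace $m^{v}_i m^v$ by $(m_im)^{v}$, deleting this letter if $m_im$ is the identity. One checks that the outcome is again reduced and depends only on the class $[w]$, so this gives a well-defined right action of $M_v$ on $\Sigma$ for each $v$. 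These actions are compatible with the products and identities of the $M_v$, so by the universal property of the free product they assemble into a right action of $\coprod_v M_v$ on $\Sigma$; and since $w\cdot m^u n^v = w\cdot n^v m^u$ whenever $u,v$ are adjacent, this action factors through the congruence $R_G$, yielding an action of $M$ on $\Sigma$. Finally, for a reduced expression $w=m^{v_1}_1\cdots m^{v_k}_k$, acting on the class of the empty expression by the element of $M$ it represents, one letter at a time, never triggers a merge or a deletion, because every prefix of $w$ is again reduced; hence the empty class is sent to $[w]$. So any two reduced expressions representing the same element of $M$ are sent to the same class of $\Sigma$, i.e.\ are shuffle equivalent.

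I expect the main obstacle to be the well-definedness of $w\cdot m^v$: that "shuffle $m^v$ as far left as possible" has an answer independent of the chosen sequence of shuffles and of the chosen representative of $[w]$, and that the merge with an equal-vertex letter cannot create a reducedness violation elsewhere in the word, and that the recipe is compatible with products in $M_v$. The structural fact that makes this manageable is that a shuffle never reverses the relative order of two letters carrying the same vertex, so the subsequence of vertex-$v$ letters of a reduced word is a shuffle invariant; granting this, the checks become a finite case analysis on where the appended letter comes to rest relative to the existing occurrences of $v$. Equivalently, one can repackage the whole uniqueness argument as local confluence, modulo shuffle equivalence, of the length-decreasing rewriting system used in the existence proof, and conclude via Newman's lemma.
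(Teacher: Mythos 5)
The paper does not actually prove this theorem; it is imported verbatim from the cited source (Fountain--Kambites, Theorem 1.1), so there is no in-paper proof to compare your attempt against. Your proposal, however, reconstructs what is essentially the standard argument in that literature (going back to Green's thesis): reduced expressions exist by a length-decreasing rewriting induction, and uniqueness up to shuffling follows from the van der Waerden trick of letting $G(M_v)$ act on the set of shuffle classes of reduced words and then evaluating that action, one letter at a time, on the class of the empty word. Your outline is sound, and you correctly locate where all the work lives: well-definedness of $[w]\cdot m^v$, its independence of the representative of $[w]$, and its compatibility with the multiplication of each $M_v$. The invariant you isolate --- a shuffle never transposes two letters carrying the same vertex, so the subsequence of $v$-letters is a shuffle invariant --- is exactly the right lemma, and is what makes ``push $m^v$ as far left as it will go'' land in a well-defined class. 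The most delicate case, which your sketch only gestures at, is when the merge $m_i m$ is an identity and the letter is deleted: one must check both that the deletion cannot destroy reducedness elsewhere and that $(w\cdot m^v)\cdot n^v=[w]\cdot(mn)^v$ still holds when the intermediate deletion lets $n^v$ travel strictly further left. Both checks do go through, for the same reason: every letter the appended generator has passed lies at a vertex adjacent to $v$, so none of them can be the unique non-adjacent separator for a pair of equal-vertex letters at a vertex $u$ with $\{u,v\}\notin E$. One caveat on the statement itself: with the paper's literal definition of ``reduced,'' which does not forbid identity letters, the theorem is false as stated (the empty word and the one-letter word $1^v$ are both reduced, represent the same element, and are not shuffle equivalent); the cited source additionally requires every letter to be a non-identity, and your existence argument implicitly adopts that convention by deleting identity letters.
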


In this section, we use a categorical description of Green products to define a similar construction in a more general context. The relevant property of $\Mon$ that we need for this generalization is that $\Mon$ is a \emph{pointed category}.

Let $\C$ be a category. An object of $\C$ which is both initial and terminal is called a \define{zero object}. If $\C$ has such an object, $\C$ is called a \define{pointed category} \cite{HomotAlg}. For any two objects $A, B$ of a pointed category, there is a unique map $0 \maps A \to B$ which is the composite of the unique map from $A$ to the zero object, and the unique map from the zero object to $B$.
If $\C$ is a pointed category with finite products, then for two objects $A, B$ of $\C$, the objects admit canonical maps $A \to A \times B$.
\[
\begin{tikzcd}
    &
    A
    \arrow[ddl, "1", bend right, swap]
    \arrow[d, "\exists!i_A"]
    \arrow[ddr, "0", bend left]
    \\&
    A \times B
    \arrow[dl, "\pi_A"]
    \arrow[dr, "\pi_B", swap]
    \\
    A
    &&
    B
\end{tikzcd}\]
So we have the following maps
\[
\begin{tikzcd}
    A
    \arrow[dr, "i_A"]
    &&
    B
    \arrow[dl, "i_B", swap]
    \\&
    A \times B
    \arrow[dr, "\pi_B", swap]
    \arrow[dl, "\pi_A"]
    \\
    A
    &&
    B
\end{tikzcd}\]
satisfying the following properties.
\begin{align*}
    \pi_A i_A &= 1_A&
    \pi_B i_B &= 1_B\\
    \pi_B i_A &= 0&
    \pi_A i_B &= 0
\end{align*}
This is suggestive of a biproduct, but in a general pointed category $A \times B$ is not necessarily isomorphic to $A + B$.

In Section \ref{sec:funnetmod}, we use a generalized Green product to construct network models. 
A generalized Green product is a colimit of a diagram whose shape is derived from a given graph.
We describe the shapes of the diagrams here with \emph{quivers}.

We assume the reader is familiar with directed multi-graphs. We refer to them here as \emph{quivers} to help distinguish them from other variants of graphs and the role they play in this work. A \define {quiver} is a pair of sets $E$, $V$, respectively called the \emph{set of edges} and \emph{set of vertices}, and a pair of functions $s, t \maps E \to V$ assigning to each edge its \emph{starting} vertex and its \emph{terminating} vertex respectively. A \define{map of quivers} is a pair of functions 
\begin{displaymath}
\begin{tikzcd}
    E_1
    \arrow[d, shift right = 1, "s_1", swap]
    \arrow[d, shift left = 1, "t_1"]
    \arrow[r, "f_E"]
    &
    E_2
    \arrow[d, shift right = 1, "s_2", swap]
    \arrow[d, shift left = 1, "t_2"]
    \\
    V_1
    \arrow[r, "f_V", swap]
    &
    V_2
\end{tikzcd}
\end{displaymath}
such that the $s$-square and the $t$-square both commute.

We will use the word \define{cospan} to refer to the quiver with the following shape. \[\bullet \to \bullet \leftarrow \bullet\]
Define a functor $IC \maps \sGrph \to \Quiv$ which replaces every edge with a cospan ($IC$ stands for `insert cospan'). Specifically, given a simple graph $(V, E)$ where $E \subseteq \binom{V}{2}$, define the quiver $Q_1 \rightrightarrows Q_0$ where $Q_0 = V \sqcup E$ and $Q_1 = \{(v, e) \in V \times E |\, v \in e\}$, then define the source map $s \maps Q_1 \to Q_0$ by projection onto the first component, and the target map $t \maps Q_1 \to Q_0$ by projection onto the second component. For example, the simple graph 
\[
\begin{tikzpicture}
    \node[style=species] (1) {$1$};
    \node[style=species] (2) [right = 1.5cm of 1]  {$2$};
    \node[style=species] (3) [below = 1.5cm and 1.5cm of 2] {$3$};
    \node[style=species] (4) [below = 1.5cm of 1] {$4$};

    \path[draw, thick]
    (1) edge node {} (2)
    (2) edge node {} (3)
    (3) edge node {} (1)
    (4) edge node {} (1);
\end{tikzpicture}\] 
gives the quiver 
\[
\begin{tikzcd}
    1
    \arrow[r]
    \arrow[dr]
    \arrow[d]
    &
    \{1, 2\}
    &
    2
    \arrow[l]
    \arrow[d]
    \\
    \{1, 4\}
    &
    \{1, 3\}
    &
    \{2, 3\}
    \\
    4
    \arrow[u]
    &&
    3
    \arrow[u]
    \arrow[ul]
\end{tikzcd}\] 

Let $G=(V, E)$ and $G' = (V', E')$ be simple graphs, and $f \maps G \to G'$ a map of simple graphs. Define a map of quivers $ICf \maps IC(G) \to IC(G')$ by $ICf_0 = f_V \sqcup f_E$ and $ICf_1(v, e) = (f_V(v), f_E(e))$. 

\[
\begin{tikzcd}
    IC(G)_1
    \arrow[d, "s_G", bend right, swap]
    \arrow[d, "t_G", bend left]
    \arrow[r, "ICf_1"]
    &
    IC(G')_1
    \arrow[d, "s_{G'}", bend right, swap]
    \arrow[d, "t_{G'}", bend left]
    \\
    IC(G)_0
    \arrow[r, "IC f_0", swap]
    &
    IC(G')_0
\end{tikzcd}\]
This construction gives a coproduct preserving functor $IC \maps \sGrph \to \Quiv$.

Let $F \maps \Quiv \to \Cat$ denote the free category (or path category) functor \cite{CWM}. Since $F$ is a left adjoint, it preserves colimits. Notice that any quiver of the form $IC(G)$ would never have a path of length greater than 1. Thus the free path category on $IC(G)$ simply has identity morphisms adjoined. 

The objects in the category $F(IC(G))$ come from two places. There is an object for each vertex of $G$, and there is an object at the apex of the cospan for each edge in $G$. We call these two subsets of objects \define{vertex objects} and \define{edge objects}. We abuse notation and refer to the object given by the vertex $u$ by the same name, and similar for edge objects.

If $\{M_v\}_{v\in V}$ is a family of monoids indexed by the set $V$, that means that there is a functor $M \maps V \to \Mon$ from the set $V$ thought of as a discrete category. Notice that if $G$ is a simple graph with vertex set $V$, then the discrete category $V$ is a subcategory of $F(IC(G))$. 
We can then extend the functor $M$ to \[D \maps F(IC(G)) \to \Mon\] in the following way. Obviously we let $D(u) = M_u$ for a vertex object $u$. If $\{u, v\}$ is an edge in $G$, then $D(\{u, v\}) = M_u \times M_v$. The morphism $(u, \{u, v\})$ is sent to the canonical map $M_u \to M_u \times M_v$. For example, for a family of monoids $\{M_1, \dots M_4\}$, we have the following diagram.

\[
\begin{tikzcd}
    M_1
    \arrow[r]
    \arrow[dr]
    \arrow[d]
    &
    M_1 \times M_2
    &
    M_2
    \arrow[l]
    \arrow[d]
    \\
    M_1 \times M_4
    &
    M_1\times M_3
    &
    M_2\times M_3
    \\
    M_4
    \arrow[u]
    &&
    M_3
    \arrow[u]
    \arrow[ul]
\end{tikzcd}\] 
Since there are no non-trivial pairs of composable morphisms in categories of the form $F(IC(G))$, nothing further needs to be checked to confirm $D$ is a functor.

Despite the way we are denoting these products, we are not considering them to be ordered products. Alternatively, we could have used a more cumbersome notation that does not suggest any order on the factors.

\begin{thm}
    Let $V$ be a set, $\{M_v\}_{v \in V}$ be a family of monoids indexed by $V$, and $G = (V, E)$ be a simple graph with vertex set $V$. The $G$ Green product of $M_v$ is the colimit of the diagram $D \maps F(IC(G)) \to \Mon$ defined as above.
    \[
        G(M_v) \cong \colim D.
    \]
\end{thm}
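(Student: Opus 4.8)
The plan is to verify the isomorphism by comparing universal properties: both $G(M_v)$ and $\colim D$ receive maps from each component monoid $M_v$, and I will show that a cocone over $D$ is the same data as a collection of monoid homomorphisms $M_v \to N$ whose images satisfy exactly the partial commutativity relations of $R_G$. First I would spell out what a cocone $(N, (\phi_X)_X)$ under $D$ consists of. The category $F(IC(G))$ has only identity morphisms composable nontrivially, so the cocone data is: a homomorphism $\phi_u \maps M_u \to N$ for each vertex object $u$, a homomorphism $\phi_e \maps M_u \times M_v \to N$ for each edge object $e = \{u,v\}$, and the commutativity constraints coming from the two legs of each cospan, namely $\phi_e \circ i_{M_u} = \phi_u$ and $\phi_e \circ i_{M_v} = \phi_v$.

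The key step is the observation that a monoid homomorphism out of a product $M_u \times M_v$ is equivalent to a pair of homomorphisms $M_u \to N$, $M_v \to N$ \emph{with commuting images}. Concretely, $\psi \maps M_u \times M_v \to N$ determines $\psi\circ i_{M_u}$ and $\psi\circ i_{M_v}$, and since $i_{M_u}(m)\cdot i_{M_v}(n) = (m,n) = i_{M_v}(n)\cdot i_{M_u}(m)$ in $M_u\times M_v$, these two restrictions have commuting images; conversely, given commuting-image homomorphisms $\alpha,\beta$, the assignment $(m,n)\mapsto \alpha(m)\beta(n)$ is a well-defined homomorphism, and this correspondence is a bijection. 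Applying this to each edge object: the constraints $\phi_e\circ i_{M_u} = \phi_u$ and $\phi_e\circ i_{M_v}=\phi_v$ force $\phi_e$ to be recovered from $\phi_u$ and $\phi_v$, and such a $\phi_e$ exists if and only if $\phi_u$ and $\phi_v$ have commuting images in $N$. Hence a cocone under $D$ is precisely a family $(\phi_v \maps M_v \to N)_{v\in V}$ such that $\phi_u$ and $\phi_v$ have commuting images whenever $\{u,v\}\in E$.

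Then I would invoke the universal property of the free product: a family $(\phi_v \maps M_v \to N)_{v\in V}$ is the same as a homomorphism $\coprod_{v} M_v \to N$, and the commuting-image condition on adjacent pairs is exactly the condition that this homomorphism kill the generating relations of the congruence $R_G$, i.e.\ that it factor through $(\coprod_v M_v)/R_G = G(M_v)$. Therefore $G(M_v)$ and $\colim D$ corepresent the same functor $\Mon \to \Set$, so they are canonically isomorphic, and one checks the isomorphism identifies the structure maps $M_v \to G(M_v)$ with the colimit coprojections (restricted along the vertex objects). I expect the only mild obstacle is bookkeeping the two directions of the product/commuting-pair bijection carefully — in particular checking that $(m,n)\mapsto \alpha(m)\beta(n)$ respects multiplication requires the commuting hypothesis at exactly the right place — but there is no serious difficulty, since $F(IC(G))$ has no nontrivial composites and so no cocycle conditions beyond the per-edge constraints.
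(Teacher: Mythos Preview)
Your proposal is correct and proves the same statement, but the route differs from the paper's in a way worth noting. The paper verifies directly that $G(M_v)$ has the universal property of $\colim D$: given a cocone $(Q,f_u,f_{u,v})$ it writes down the induced map $\phi\maps G(M_v)\to Q$ on \emph{expressions} $m_1^{v_1}\cdots m_k^{v_k}$ and then appeals to the normal-form theorem for graph products (Theorem~\ref{thm:shuffle}) to check that $\phi$ is well defined, since any two reduced expressions for the same element differ by shuffles. Your argument avoids this combinatorial input entirely: by observing that $\Hom_\Mon(M_u\times M_v,N)$ is naturally the set of pairs of homomorphisms $M_u\to N$, $M_v\to N$ with commuting images, you reduce cocones under $D$ to families $(\phi_v)$ satisfying exactly the generating relations of $R_G$, and then invoke only the universal properties of coproducts and of quotients by congruences. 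This is more self-contained and makes no use of the normal-form theorem; the paper's version, by contrast, is more explicit about the induced map at the level of elements, which is convenient later when one wants to compute with expressions in $G(M_v)$. Either way the argument is short; your only point of care is the bijection you flagged, and your sketch of it is correct.
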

\begin{proof}
    We show that $G(M_v)$ satisfies the necessary universal property. The vertex objects in the diagram have inclusion maps into the edge objects $i_{u, v} \maps M_u \to M_u \times M_v$, and all the objects have inclusion maps into $G(M_v)$, $j_u \maps M_u \to G(M_v)$ and $j_{u, v} \maps M_u \times M_v \to G(M_v)$ such that $j_{u, v} \circ i_{u, v} = j_u$. Note that due to the fact that we have unordered products for objects, there is some redundancy in our notation, namely $j_{u, v} = j_{v, u}$.
    If we have a monoid $Q$ and maps $f_u \maps M_u \to Q$ and $f_{u, v} \maps M_u \times M_v \to Q$ such that
    \begin{align*}
        f_{u, v} &= f_{v, u}\\
        f_{u, v} \circ i_{u, v} &= f_u, 
    \end{align*}
    then we define a map $\phi \maps G(M_v) \to Q$ by $\phi(m^{v_1}_1 \dots m^{v_k}_k) = f_{v_1}(m_1) \dots f_{v_k}(m_k)$. Since this map is defined via expressions of elements, Theorem \ref{thm:shuffle} tells us that to check this map is well-defined, we need only check that the values of two expressions that differ by a shuffle are the same. Let $m^{v_1}_1 \dots m^{v_k}_k$ be an expression, and $i$ such that $\{v_i, v_{i+1}\} \in E$. 
    \begin{align*}
        \phi (m^{v_i}_i m^{v_{i+1}}_{i+1})
        &= f_{v_i}(m_i) f_{v_{i+1}}(m_{i+1})
        \\&= f_{v_i, v_{i+1}} (m_i, m_{i+1})
        \\&= f_{v_{i+1}}(m_{i+1}) f_{v_i}(m_i)
        \\&= \phi (m^{v_{i+1}}_{i+1} m^{v_i}_i)
    \end{align*}
    It is clear that
    \[
        \phi (m^{v_1}_1 \dots m^{v_k}_k) = \phi (m^{v_1}_1 \dots m^{v_{i-1}}_{i-1}) \phi( m^{v_i}_im^{v_{i+1}}_{i+1}) \phi(m^{v_{i+2}}_{i+2} \dots m^{v_k}_k), 
    \] 
    so two shuffle equivalent expressions have the same value under $\phi$, and $\phi$ is well-defined. It is clearly a monoid homomorphism, and has the property $\phi \circ j_u = f_u$ and $\phi \circ j_{u, v} = f_{u, v}$. To show this map is unique, assume there is another such map $\psi \maps G(M_v) \to Q$. Since $\psi \circ j_u = f_u$, then $\psi(m_u) = f(u)$, and
    \begin{align*}
        \psi(m^{v_1}_1 \dots m^{v_k}_k) 
        &= \psi(m^{v_1}_1) \dots \psi(m^{v_k}_k)
        \\&= f_{v_1}(m_1) \dots f_{v_k}(m_k)
        \\&= \phi(m^{v_1}_1 \dots m^{v_k}_k).\qedhere
    \end{align*}
\end{proof}

This result makes it reasonable to generalize Green products in the following way.

\begin{defn}
    Let $\C$ be a pointed category with finite products and finite colimits, $V$ a set, $\{A_v\}_{v \in V}$ a family of objects of $\C$ indexed by $V$, and $G$ a simple graph with vertex set $V$. Let $D \maps F(IC(G)) \to \C$ be the diagram defined by $v \mapsto A_v$, $\{u, v\} \mapsto A_u \times A_v$, and the morphism $(u, \{u, v\})$ is mapped to the inclusion $A_u \to A_u \times A_v$ as above. The \define{$G$ Green product} of $\{A_v\}_{v \in V}$ is the colimit of $D$ in $\C$, 
    \[
        G^\C(A_v) = \colim D.
    \]
    If $\C=\Mon$, we denote the Green product simply as $G(A_v)$.
\end{defn}

In Section \ref{sec:funnetmod}, we use this general notion of graph products in \emph{varieties of monoids} to construct network models whose constituent monoids are in those varieties.
Note that since $F \circ IC$ is a functor, the group $\Aut(G)$ of graph automorphisms of $G$ naturally acts on $G^\C(A_v)$.

\subsection{Kneser Graphs}
\label{sec:kneser}

We focus here on a special family of simple graphs known as the \emph{Kneser graphs} \cite{Lovasz}. 
The \define{Kneser graph} $KG_{n,m}$ has vertex set $\binom{n}{m}$, the set of $m$-element subsets of an $n$-element set, and an edge between two vertices if they are disjoint subsets. Since a simple graph is defined as a collection of two-element subsets of an $n$-element set, the Kneser graph $KG_{n,2}$ has a vertex for each edge in the complete graph on $\n$, and has an edge between every pair of vertices which correspond to disjoint edges.
So the Kneser graph $\KG_{n,2}$ can be thought of as describing the disjointness of edges in the complete graph on $\n$. 
For instance, the complete graph on $\mathbf 4$ is 
\[
\begin{tikzpicture}
    \node[style=species] (1) {$1$};
    \node[style=species] (2) [right = 1.5cm of 1]  {$2$};
    \node[style=species] (3) [below = 1.5cm and 1.5cm of 2] {$3$};
    \node[style=species] (4) [below = 1.5cm of 1] {$4$};

    \path[draw,thick]
    (1) edge node {} (2)
    (1) edge node {} (3)
    (1) edge node {} (4)
    (2) edge node {} (3)
    (2) edge node {} (4)
    (3) edge node {} (4);
\end{tikzpicture}\] 
and the corresponding Kneser graph $\KG_{4,2}$ is
\[
\begin{tikzpicture}
    \node[style=species] (1) {$12$};
    \node[style=species] (2) [below = 1.5cm of 1] {$34$};
    \node[style=species] (3) [right = 1.5cm of 1] {$24$};
    \node[style=species] (4) [below = 1.5cm of 3] {$13$};
    \node[style=species] (5) [right = 1.5cm of 3] {$23$};
    \node[style=species] (6) [below = 1.5cm of 5] {$14$};

    \path[draw,thick]
    (1) edge node {} (2)
    (3) edge node {} (4)
    (5) edge node {} (6);
\end{tikzpicture}\] 
where we label the node with $uv$ if it corresponds to the edge $\{u,v\}$. One can see that an edge in the second graph corresponds exactly to a pair of disjoint edges in the first graph. For another example, $\KG_{5,2}$ is the Petersen graph. 
For sets $X,Y$ and a function $f \maps X \to Y$, let $f[U] = \{f(x) |\, x \in U\}$ for $U \subseteq X$. Let $\FinInj$ denote the category of finite sets and injective functions.

\begin{lem}
    For $k \in \N$, there is a functor $\bink{-} \maps \FinInj \to \FinInj$ which sends $X$ to $\bink{X}$ the set of $k$-element subsets of $X$, and injections $f \maps X \to Y$ to the functions $\bink{f} \maps \bink{X} \to \bink{Y}$ defined by $\bink{f}(U) = f[U]$. 
\end{lem}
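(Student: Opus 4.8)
The plan is to check the three things required for $\bink{-}$ to define a functor $\FinInj \to \FinInj$: that the object assignment lands in $\FinInj$, that the morphism assignment lands in $\FinInj$, and that identities and composites are respected. The object part is immediate, since if $X$ is a finite set then $\bink{X}$ is a set of subsets of $X$, hence finite, hence an object of $\FinInj$.

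For the morphism part, I would take an injection $f \maps X \to Y$ and a subset $U \in \bink{X}$, so $U \subseteq X$ with $|U| = k$. Then $f[U] \subseteq Y$, and since $f$ restricts to a bijection $U \to f[U]$ we get $|f[U]| = k$; thus $\bink{f}(U) = f[U] \in \bink{Y}$, so $\bink{f}$ is a well-defined function $\bink{X} \to \bink{Y}$. I would then verify that $\bink{f}$ is itself injective, so that it is a morphism of $\FinInj$ and not merely a function between the underlying sets: if $f[U] = f[U']$ then, writing $f\inv$ for the inverse of $f$ on its image, $U = f\inv[f[U]] = f\inv[f[U']] = U'$. This is the one place where the injectivity hypothesis on morphisms of $\FinInj$ is genuinely used — without it $f[U]$ could have fewer than $k$ elements, so the assignment would already fail to be well-defined at the level of objects of the codomain.

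Functoriality is then a routine computation with set-images: $\bink{1_X}(U) = 1_X[U] = U$ gives $\bink{1_X} = 1_{\bink{X}}$, and for composable injections $f \maps X \to Y$ and $g \maps Y \to Z$ the identity $(g \circ f)[U] = g[f[U]]$ gives $\bink{g \circ f} = \bink{g}\circ\bink{f}$. I anticipate no real obstacle: the substance of the lemma is just the observation that injections preserve cardinality, which is exactly what makes all the relevant images land in the right sets, and the functoriality equations reduce to the elementary fact that taking the image of a set is itself functorial.
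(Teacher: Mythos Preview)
Your proposal is correct and follows essentially the same approach as the paper: verify that injectivity of $f$ forces $|f[U]| = |U|$ so that $\bink{f}$ is well defined, and then check that $\bink{f}$ is itself injective. Your argument is in fact more complete than the paper's, which omits the routine checks of finiteness of $\bink{X}$ and of preservation of identities and composition; the only cosmetic difference is that the paper proves injectivity of $\bink{f}$ by an explicit element chase rather than by invoking the partial inverse $f\inv$ on the image.
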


Note that this result holds for $\Inj$ the category of sets and injective functions, but we only require $\FinInj$ for our purposes.

\begin{proof}
    If $f \maps X \to Y$ is an injection, then $|f[U]| = |U|$ for $U \subseteq X$. It then makes sense to restrict the induced map on power sets to subsets of a fixed cardinality. The map $\bink{f} \maps \bink{m} \to \bink{n}$ defined by $\bink{f}(U) = f[U]$ is then well defined. If $f[U]=f[V]$ and $x \in U$, then $f(x) \in f[U] = f[V]$, which implies there is a $y \in V$ such that $f(y) = f(x)$. Since $f$ is injective, then $x = y \in V$. Thus $U = V$ by symmetry.
\end{proof}

Let $i_X$ and $i_Y$ denote the following inclusion maps.
\[
\begin{tikzcd}
    X
    \arrow[dr,"i_X"]
    &&
    Y
    \arrow[dl,"i_Y",swap]
    \\&
    X+Y
\end{tikzcd}\]
Since these maps are injective, they induce maps $\bink{i_X}, \bink{i_Y}$, and we get a map $\Phi_{X,Y} \maps \bink{X}+\bink{Y} \to \bink{X+Y}$ by the universal property in the following way.
\[
\begin{tikzcd}
    \bink{X}
    \arrow[dr,"j_X"]
    \arrow[ddr,"\bink{i_X}",bend right,swap]
    &&
    \bink{Y}
    \arrow[dl,"j_Y",swap]
    \arrow[ddl,"\bink{i_Y}",bend left]
    \\&
    \bink{X}+\bink{Y}
    \arrow[d,"\exists!\Phi_{X,Y}",dashed]
    \\&
    \bink{X+Y}
\end{tikzcd}\]

\begin{lem}
\label{lem:chooselax}
    The functor $\bink{-}$ is made lax symmetric monoidal \[(\bink{-},\Phi, \phi) \maps (\FinInj,+, \emptyset) \to (\FinInj, +, \emptyset)\] where the components of $\Phi$ are defined as above.
\end{lem}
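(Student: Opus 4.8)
The plan is to verify directly the axioms of a lax symmetric monoidal functor for $(\bink{-},\Phi,\phi)$: naturality of the laxator $\Phi$, compatibility with the associators, compatibility with the unit, and compatibility with the braidings. The observation that makes all of this routine is that morphisms of $\FinInj$ are in particular functions, so two parallel morphisms of $\FinInj$ are equal as soon as they agree on elements; moreover the associators, unitors, and braidings of $(\FinInj,+,\emptyset)$ are the standard structure maps of disjoint union. Unravelling the universal property used to define $\Phi_{X,Y}$, it is the copairing of the direct-image maps $\bink{i_X}$ and $\bink{i_Y}$: a $k$-element subset $U\subseteq X$ sitting in the left summand is sent to $i_X[U]\in\bink{X+Y}$, and a $k$-element subset $W\subseteq Y$ in the right summand to $i_Y[W]$. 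Since these images always land in disjoint parts of $X+Y$, the resulting function is injective, hence a genuine morphism of $\FinInj$, and this elementwise description drives every verification below.

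First I would check naturality: for injections $f\maps X\to X'$ and $g\maps Y\to Y'$, chasing $U\in\bink{X}$ around the naturality square reduces to $(f+g)\bigl[i_X[U]\bigr]=i_{X'}\bigl[f[U]\bigr]$, which is immediate from $(f+g)\circ i_X=i_{X'}\circ f$, and symmetrically on the $Y$-summand. For the associativity coherence, take $U$ in one of the three summands $\bink{X},\bink{Y},\bink{Z}$; both composites from $\bink{X}+\bink{Y}+\bink{Z}$ to $\bink{X+Y+Z}$ send $U$ to its image under the canonical inclusion of the relevant summand, so the identity to be checked is just the coherence of the associator of $(\FinInj,+)$ together with functoriality of direct image (the preceding lemma on $\bink{-}$). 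For the unit, $\phi\maps\emptyset\to\bink{\emptyset}$ is forced, being the unique map out of the initial object; for $k\geq 1$ the set $\bink{\emptyset}$ is itself empty, so $\phi$ is an isomorphism and the unit triangles reduce to the statement that $\bink{-}$ carries the structural isomorphisms $\emptyset+X\cong X$ to the corresponding isomorphisms, again by functoriality of direct image.

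Finally, the symmetry axiom requires $\Phi_{Y,X}\circ\beta=\bink{\beta_{X,Y}}\circ\Phi_{X,Y}$, where $\beta$ denotes the relevant braidings. Tracing $U\in\bink{X}$: the left-hand side sends it to the image of $U$ under the inclusion $X\hookrightarrow Y+X$, while the right-hand side first includes $U$ along $X\hookrightarrow X+Y$ and then takes the direct image along the braiding $\beta_{X,Y}\maps X+Y\to Y+X$; these agree because $\beta_{X,Y}$ restricts to the evident bijection on the $X$-summand, and symmetrically for $\bink{Y}$. I do not anticipate any real obstacle: the whole lemma is bookkeeping once everything is reduced to checking equalities of injections on elements. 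The one point worth a word of care is the monoidal unit, since for $k=0$ there is no injection $\bink{\emptyset}+\bink{\emptyset}\to\bink{\emptyset}$ at all; the statement should therefore be read for $k\geq 1$, which is the only case used later. Combining these verifications gives the asserted lax symmetric monoidal structure.
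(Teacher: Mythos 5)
Your proof is correct and follows essentially the same route as the paper, which simply asserts that naturality, the unit laws, and the associator and symmetry coherences are straightforward to check; you have filled in exactly those elementwise verifications. Your caveat about $k=0$ is a genuine observation the paper omits: there $\bink{\emptyset}$ is a singleton, so $\Phi_{X,Y}$ fails to be injective and the lemma should indeed be read for $k\geq 1$ (only $k=2$ is used later).
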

\begin{proof}
    The family of maps $\{\Phi_{X,Y}\}$ is clearly a natural transformation.
    There is no choice for the map $\phi \maps \emptyset \to \bink{\emptyset}$.
    The left and right unitor laws hold trivially. Checking the coherence conditions for the associator and the symmetry are straightforward computations.
\end{proof}

For $n,k \in \N$, the simple graph $KG_{n,k}$ has vertex set $V = \bink{\n}$ and edge set $\{\{u,v\} \subseteq \binom{V}{2} |\, u \cap v = \emptyset\}$. 
If $f \maps \m \to \n$ is injective, then we get a map $\bink{f}$ between the vertex sets of $KG_{m,k}$ and $KG_{n,k}$. Let $\{u,v\} \in \binom{V}{2}$ be an edge in $KG_{m,k}$. Then $f[u] \cap f[v] = \emptyset$ by injectivity, so $\{f[u],f[v]\}$ is an edge of $KG_{n,k}$. An injection $f$ then induces a map of graphs, denoted $KG_{f,k} \maps KG_{m,k} \to KG_{n,k}$. 
Since $\bink{f}$ is injective, $KG_{f,k}$ is an embedding.
Nothing about this construction requires finiteness of the sets involved, but our applications only call for finite graphs. 

\begin{prop}
    For $k \in \N$, there is functor $KG_{-,k} \maps \FinInj \to \sGrph$ which sends $\n$ to $KG_{n,k}$ and $f\maps \m \to \n$ to $KG_{f,k}$.
\end{prop}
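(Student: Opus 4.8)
The plan is to piggyback on the functoriality of $\binom{-}{k}$ established in the earlier lemma, since the vertex function underlying $KG_{f,k}$ is by construction $\binom{f}{k}$. First I would note that the discussion immediately preceding the statement already does the nontrivial verification: it checks that the object assignment $\n \mapsto KG_{n,k}$ makes sense, and that for an injection $f \maps \m \to \n$ the function $\binom{f}{k}$ on vertex sets sends edges of $KG_{m,k}$ to edges of $KG_{n,k}$ (because $u \cap v = \emptyset$ forces $f[u] \cap f[v] = \emptyset$ by injectivity), so that $KG_{f,k}$ really is a morphism in $\sGrph$. What remains is only to confirm the two functor axioms.

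Both axioms reduce to identities already known, because a morphism of simple graphs is nothing but a function on vertex sets satisfying a condition, so two such morphisms are equal precisely when their underlying vertex functions agree, and composition in $\sGrph$ is composition of underlying functions. For identities: the underlying vertex function of $KG_{\mathrm{id}_\n, k}$ is $\binom{\mathrm{id}_\n}{k} = \mathrm{id}_{\binom{\n}{k}}$ by the lemma, which is the underlying function of $\mathrm{id}_{KG_{n,k}}$; hence $KG_{\mathrm{id}_\n, k} = \mathrm{id}_{KG_{n,k}}$. For composition: given injections $f \maps \m \to \n$ and $g \maps \n \to \mathbf{p}$, the underlying vertex function of $KG_{g \circ f, k}$ is $\binom{g \circ f}{k}$, and since $\binom{-}{k}$ is a functor this equals $\binom{g}{k} \circ \binom{f}{k}$, which is the underlying vertex function of $KG_{g,k} \circ KG_{f,k}$; hence $KG_{g \circ f, k} = KG_{g,k} \circ KG_{f,k}$.

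I do not anticipate a genuine obstacle here: the only mildly delicate point is the bookkeeping observation that $\sGrph$-morphisms and their composites are determined on vertex sets, which lets one transport the functor structure of $\binom{-}{k}$ on $\FinInj$ verbatim. Everything substantive — that injections preserve disjointness and hence carry edges to edges — has already been dispatched before the statement.
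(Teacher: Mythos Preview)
Your proposal is correct and matches the paper's approach: the paper states this proposition without an explicit proof, treating it as evident from the preceding paragraph (which verifies that $KG_{f,k}$ is a graph map) together with the lemma on the functoriality of $\bink{-}$. You have simply spelled out the details the paper leaves implicit, and your reduction of the functor axioms to those of $\bink{-}$ via the observation that $\sGrph$-morphisms are determined by their vertex functions is exactly the intended reasoning.
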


Not only does $KG_{m,k}$ embed into $KG_{n,k}$ when $m<n$, but $KG_{m,k} + KG_{n,k}$ embeds into $KG_{m+n,k}$. 
We construct the embedding $KG_{m,k} + KG_{n,k} \to KG_{m+n,k}$ by using the lax structure map from Lemma \ref{lem:chooselax} for the vertex map, $\Phi_{\m,\n} \maps \bink{\m} + \bink{\n} \to \bink{\m+\n}$. Restricting this map to either $\bink{\m}$ (resp. $\bink{\n}$) gives the map $\bink{i_\m}$ (resp. $\bink{i_\n}$) which we already know induces a map of graphs. Thus $\Phi_{\m,\n}$ induces a map of graphs, which we call $\Psi_{\m,\n}$.

\begin{prop}
    The functor $KG_{-,k}$ is made lax (symmetric) monoidal \[(KG_{-,k},\Psi) \maps (\Inj,+) \to (\sGrph, +)\] where the components of $\Psi$ are defined as above.
\end{prop}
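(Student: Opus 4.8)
The plan is to obtain the lax symmetric monoidal structure on $KG_{-,k}$ by transporting the one on $\bink{-}$ from Lemma \ref{lem:chooselax} along the forgetful functor that remembers only vertices. Write $U \maps \sGrph \to \Set$ for the functor sending a simple graph $(V,E)$ to its vertex set $V$ and a graph map to its underlying function. Two observations make this work: $U$ is faithful (a graph map is determined by its vertex function), and $U$ is strictly symmetric monoidal from $(\sGrph, +)$ to $(\Set, +)$, since the disjoint union of simple graphs has vertex set the disjoint union of the vertex sets — with no edges running between the two summands — and the associator, unitors, and symmetry of $(\sGrph, +)$ are the canonical ones built from the coproduct, hence are sent by $U$ to the canonical ones on $\Set$. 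By the very definitions, $U \circ KG_{-,k}$ is the functor $\bink{-}$ (landing in $\Set$ via $\FinInj \hookrightarrow \Set$), and $U(\Psi_{\m,\n}) = \Phi_{\m,\n}$.

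The well-definedness of $\Psi_{\m,\n}$ as a graph map is exactly the discussion preceding the statement: a function out of a coproduct $KG_{m,k} + KG_{n,k}$ is a graph map iff its restrictions to the two summands are, and here those restrictions are $\bink{i_\m}$ and $\bink{i_\n}$, already known to be graph maps. Naturality of $\Psi$ and each coherence axiom — the associativity coherence diagram, the two unitor triangles, and the symmetry coherence for $(KG_{-,k}, \Psi, \psi)$ — are equalities of composites of graph maps. Applying the strict monoidal functor $U$ turns each of these into the corresponding statement for $(\bink{-}, \Phi, \phi)$ (the structural isomorphisms of $\sGrph$ going to those of $\Set$), which holds by Lemma \ref{lem:chooselax}; since $U$ is faithful it reflects these equalities back to $\sGrph$. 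The unit comparison $\psi \maps \emptyset \to KG_{0,k}$ is the unique graph map out of the empty graph, and $U(\psi) = \phi$ is the empty function, so the unit axioms transfer along with the rest.

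This argument never invokes finiteness, so it applies verbatim with $\Inj$ in place of $\FinInj$, using the extension of Lemma \ref{lem:chooselax} noted in the remark after it. I do not anticipate a genuine obstacle: the only points demanding care are the two structural facts about $U$ — that it is faithful and that it is \emph{strictly} symmetric monoidal — since together they are what let ``commutes after applying $U$'' be upgraded to ``commutes''. One could instead verify each coherence diagram by hand on vertex sets, but that merely re-runs the computations of Lemma \ref{lem:chooselax}.
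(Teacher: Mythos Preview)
Your proof is correct and takes essentially the same approach as the paper, which dispatches the proposition in a single sentence: ``All the necessary properties for $\Psi$ are inherited immediately from $\Phi$.'' You have simply made explicit the mechanism of inheritance --- the faithful, strictly symmetric monoidal forgetful functor $U \maps \sGrph \to \Set$ --- that the paper leaves implicit, so your argument is a fleshed-out version of theirs rather than a genuinely different route.
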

\begin{proof}
    All the necessary properties for $\Psi$ are inherited immediately from $\Phi$.
\end{proof}

Let $(L, \Lambda) \maps (\Inj,+) \to (\Cat,+)$ be the composite $L = F \circ IC \circ KG_{-,2}$ with the obvious laxator. Let $M$ be a monoid. Then from the construction given in the previous subsection, for each $\n$ we get a diagram $D_n \maps L(\n) \to \Mon$ which sends all vertex objects to $M$, all edge objects to $M \times M$, and all nontrivial morphisms to inclusions $M \to M \times M$. 
Taking the colimit of $D_n$ then gives the Green product $\KG_{n,2}(M)$. 

Note that we identify constituent monoids with the corresponding submonoid of the graph product when this can be done without confusion.

\begin{prop}
\label{prop:knesercomm}
    Let $M_{p,q}$ be a $\binom{m+n}{2}$ family of monoids, and $G_1$ and $G_2$ be graphs with $m$ and $n$ vertices respectively. Let $a_1 \in M_{p_1,q_1}$ with $p_1,q_1 \leq m$ and $a_2 \in M_{p_2,q_2}$ with $p_2,q_2>m$, and let $\overline a_1, \overline a_2$ be their values under the canonical inclusions $M_{p,q} \hookrightarrow (G_1 \sqcup G_2)(M_{p,q})$. Then $\overline a_1 \overline a_2 = \overline a_2 \overline a_1$ in $(G_1 \sqcup G_2)(M_{p,q})$.
\end{prop}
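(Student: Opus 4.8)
The plan is to reduce the statement to the single observation that the indices $\{p_1,q_1\}$ and $\{p_2,q_2\}$ label a pair of \emph{adjacent} vertices in the Kneser graph underlying the Green product, so that the relevant commutator relation is already imposed when the colimit is formed.

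First I would unwind the construction of $(G_1 \sqcup G_2)(M_{p,q})$. By the development of Section \ref{sec:graphprod} --- in particular the identification of colimits of the relevant diagrams with Green products --- this monoid is the quotient $\bigl( \coprod_{\{p,q\}} M_{p,q} \bigr)/R$, where $R$ is the congruence generated by the pairs $(st,ts)$ for $s \in M_u$, $t \in M_v$ with $u$ and $v$ adjacent in the index graph $\KG_{m+n,2}$, i.e.\ with $u \cap v = \emptyset$. The canonical inclusion $M_{p,q} \hookrightarrow (G_1 \sqcup G_2)(M_{p,q})$ is a coproduct injection followed by the quotient map, hence a monoid homomorphism, so $\overline a_1 \overline a_2$ is the class of the length-two expression $a_1 a_2$ and $\overline a_2 \overline a_1$ the class of $a_2 a_1$.

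The key --- essentially the only --- step is then combinatorial: from $p_1,q_1 \leq m$ we get $\{p_1,q_1\} \subseteq \{1,\dots,m\}$, and from $p_2,q_2 > m$ we get $\{p_2,q_2\} \subseteq \{m+1,\dots,m+n\}$, so these two two-element subsets of $\{1,\dots,m+n\}$ are disjoint and therefore form an edge of $\KG_{m+n,2}$. Hence $(a_1 a_2,\, a_2 a_1)$ lies in the generating set of $R$, and $\overline a_1 \overline a_2 = \overline a_2 \overline a_1$. (Equivalently, when $a_1$ and $a_2$ are not identities the expressions $a_1 a_2$ and $a_2 a_1$ are both reduced and differ by a single shuffle across this edge, so they represent the same element by Theorem \ref{thm:shuffle}; if either is an identity the claim is trivial.)

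I do not expect a genuine obstacle here: the content is bookkeeping rather than mathematics. The one point requiring care is to confirm that the index graph of $(G_1 \sqcup G_2)(M_{p,q})$ really is $\KG_{m+n,2}$ --- so that ``disjoint subsets'' genuinely means ``adjacent vertices'' --- and that $G_1$, $G_2$ contribute the vertex blocks $\{1,\dots,m\}$ and $\{m+1,\dots,m+n\}$ in the way the hypotheses on the $p_i$, $q_i$ presume. Once that is pinned down, commutativity of $\overline a_1$ and $\overline a_2$ follows directly from the defining relations of the Green product, using nothing about $G_1$, $G_2$, or the monoids $M_{p,q}$ beyond how they are indexed.
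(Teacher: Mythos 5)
Your proof is correct and takes essentially the same route as the paper's: the paper's entire argument is the single observation that, by the hypotheses $p_1,q_1\leq m$ and $p_2,q_2>m$, the vertices $\{p_1,q_1\}$ and $\{p_2,q_2\}$ are disjoint and hence adjacent in $\KG_{m+n,2}$, so the commutation relation you need is among the generating relations of the Green product. Your write-up just makes the congruence explicit, and the point you flag --- that the index graph must really be $\KG_{m+n,2}$ rather than the literal disjoint union $G_1\sqcup G_2$ --- is precisely the reading the paper's own one-line proof presupposes.
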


\begin{proof}
    By assumption, there is an edge in $KG_{m+n,2}$ between the vertices ${p_1,q_1}$ and ${p_2,q_2}$.
\end{proof}

\subsection{Varieties of Monoids}
\label{sec:varmon}

A \define{finitary algebraic theory} or \define{Lawvere theory} is a category $T$ with finite products in which every object is isomorphic to a finite cartesian power $x^n = \prod^n x$ of a distinguished object $x$ \cite{Lawvere, ALR}. 
An \define{algebra} of a theory $T$, or \define{$T$-algebra}, is a product preserving functor $T \to \Set$. 
Let $T\Alg$ denote the category of $T$-algebras with natural transformations for morphisms. 
We are primarily concerned with monoids in this paper. 
The theory of monoids $T_\Mon$ has morphisms $m \maps x\times x \to x$ and $e \maps x^0 \to x$, which makes the following diagrams commute.
\[
\begin{tikzcd}
    x^3
    \arrow[d, "m \times 1_x", swap]
    \arrow[r, "1_x \times m"]
    &
    x^2
    \arrow[d, "m"]
    &
    x \times x^0
    \arrow[r, "1_x \times e"]
    \arrow[dr, "\simeq", swap]
    &
    x^2
    \arrow[d, "m"]
    &
    x^0 \times x
    \arrow[l, "e \times 1_x", swap]
    \arrow[dl, "\simeq"]
    \\
    x^2
    \arrow[r, "m", swap]
    &
    x
    &&
    x
\end{tikzcd}\]

A \define{variety} of $T$-algebras is a full subcategory of $T\Alg$ which is closed under products, subobjects, and homomorphic images. Birkhoff's theorem implies that this is equivalent to the category $T'\Alg$ of algebras of another theory $T'$ which has the same morphisms, but satisfies more commutative diagrams \cite{universal}.
For example, commutative monoids are given by algebras of the theory of commutative monoids $T_\CMon$, which has morphisms $m,e$ as in $T_\Mon$, satisfies the same commutative diagrams as $T_\Mon$, but also satisfies the following commutative diagram
\[
\begin{tikzcd}
    x^2
    \arrow[dr, "m", swap]
    \arrow[r, "b"]
    &
    x^2
    \arrow[d, "m"]
    \\&
    x
\end{tikzcd}\]
where $b: x^2 \to x^2$ is the braid isomorphism. We only use varieties of monoids in this paper, so we give these ``extra'' conditions by equations, e.g.\ commutative monoids are those which satisfy the equation $ab=ba$ for all elements $a,b$. We call the extra equations the \emph{defining equations} of the variety.

A \define{graphic monoid} is a monoid which satisfies the \emph{graphic identity}: $aba=ab$ for all elements $a,b$. Graphic monoids are algebras of a theory $T_\GMon$. A semigroup obeying this relation is known as a \emph{left regular band} \cite{MSS}. The term \emph{graphic monoid} was introduced by Lawvere \cite{taco}. Let $M$ be a graphic monoid. If we let $b$ be the unit of $M$, then the graphic relation says that $a^2=a$. Every element of $M$ is idempotent. If $a,c \in M$, then $ca=c$ if $c$ already has $a$ as a factor. 

Graphic monoids are present when talking about types of information where a piece of information cannot contain the same piece of information twice. A simple example can be seen in the powerset of a given set $X$, given the structure of a monoid by union. Of course, this example is overly simple because the operation is commutative idempotent, which is stronger than graphic. A more interesting example can be seen by considering the following simple graph.
\[
\begin{tikzpicture}
	\begin{pgfonlayer}{nodelayer}
		\node [style=construct] (1) at (-2, 0) {a};
		\node [style=construct] (2) at (0, 0) {b};
		\node [style=construct] (3) at (2, 0) {c};
		\node [style=none] (4) at (-1, 0.25) {x};
		\node [style=none] (5) at (1, 0.25) {y};
	\end{pgfonlayer}
	\begin{pgfonlayer}{edgelayer}
		\draw (1) to (2);
		\draw (2) to (3);
	\end{pgfonlayer}
\end{tikzpicture}\]
We will define a monoid structure on the set $M = \{1, a, b, c, x, y\}$ in the following way. First, $1$ is a freely adjoined identity element. For $p,q \in M \setminus \{1\}$, define $pq$ as follows. Pick a generic point $f$ in $p$ and a generic point $g$ in $q$. Then move a small distance along a straight line path from $f$ to $g$. We define the product $pq$ to be the component of the graph you land in. Here are some example computations:
\begin{align*}
    &ab = x
    &aa = a
    \\
    &bc = y
    &xb = x
    \\
    &ac = x
    &ca = y
\end{align*}
The last two demonstrate that this monoid is not commutative. More complicated examples can be constructed by using the same idea for the operation, but applying it to different spaces.

Our motivation for using graphic monoids is that we use the graphic relation to model "commitment" in the following way. Let $M$ be a graphic monoid, where we think of an element of $M$ as a task or list of tasks. If we first commit to doing task $x$, and then commit to doing task $y$, then we have the element $xy$ as our task list, indicating that we committed to $x$ before $y$. If we then try to commit to to doing $x$, the graphic relation saves us from recording this information twice. The relation also preserves the order in which we committed to $x$ and $y$: if $x$ is a task list of the form $x = ab$, and we have committed to $xy$, and then try to commit to $bc$, we get $(xy)(bc) = (aby)(bc) = a (byb) c = a (by) c = abyc = xyc$.

We want to construct a network model from a monoid in a variety $\V$ which has constituent monoids that are also in $\V$. 
If $M$ is a monoid in a variety $\V$, then each constituent monoid $\G_M(n)$ is a product of several copies of $M$, and so is also in $\V$ by definition. 
Thus the ordinary network model (given in Theorem \ref{thm:ordinary}) restricted to a variety gives a functor $\V \to \NM_\V$, where $\NM_\V$ denotes the category of $\V$-valued network models.

The free product of two monoids is a monoid, $M+N$ an element of which is given by a list with entries in the set $M \sqcup N$ such that if two consecutive entries of a list are either both elements of $M$ or both elements of $N$, then the list is identified with the list that is the same everywhere except that those two entries are reduced to one entry occupied by their product. Note that the empty list is identified with both the singleton list consisting of the identity element of $M$, and the singleton list consisting of the identity element of $N$. Free products of monoids gives the coproduct in the category of monoids $\Mon$. Free products of monoids are very similar to free products of groups, which can be found in most books introducing group theory \cite{Hungerford}. 

If two monoids $M$ and $N$ are in a variety $\V$, taking their free product will not necessarily produce a monoid in $\V$, i.e.\ varieties are not necessarily closed under the coproduct of $\Mon$. It is easy to find an example demonstrating this. 
Consider $\IMon$, the variety of idempotent monoids, i.e.\ monoids satisfying the equation $x^2 = x$ for all elements $x$. The boolean monoid $\B$ is an object in $\IMon$. The free product of $\B$ with itself $\B+\B$ can be generated by elements $a$ and $b$ which correspond to the element $1$ in each copy of $\B$. The element $ab \in \B+\B$ is not idempotent, as $abab \neq ab$. However, every variety $\V$ does have coproducts. The coproduct in a variety of monoids is the quotient of the free product by the congruence relation generated by the variety's defining equations. In Section \ref{sec:funnetmod} we give a construction $\V \to \NM_\V$ which uses colimits in order to impose minimal relations.

We need the following fact for the main construction of this paper. It follows immediately from the definitions.
\begin{lem}
    Every variety of monoids is a pointed category and has finite colimits.
\end{lem}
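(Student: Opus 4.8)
The plan is to produce a zero object outright and then reduce finite cocompleteness to the existence of an initial object, binary coproducts, and coequalizers, verifying each by a quotient construction performed inside $\Mon$. First, for the pointed structure: the one-element monoid $1$ lies in every variety $\V$, being the empty product (varieties are closed under products) --- equivalently, a submonoid of any object of $\V$. In $\Mon$ the monoid $1$ is terminal, since the unique function into it is always a homomorphism, and it is also initial, since a homomorphism out of $1$ must send the identity to the identity and is thereby uniquely determined. Hence $1$ is a zero object of $\Mon$, and as $\V$ is a full subcategory containing $1$, it is a zero object of $\V$ as well. This simultaneously supplies the initial object needed below.

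For coequalizers, given $f, g \maps M \to N$ in $\V$, let $N/{\sim}$ be the quotient of $N$ by the congruence generated by $\{(f(a), g(a)) \mid a \in M\}$. This is a homomorphic image of $N$, hence lies in $\V$, and it is the coequalizer of $f$ and $g$ in $\Mon$; because $\V$ is full in $\Mon$, it is also their coequalizer in $\V$. For binary coproducts, given $M, N$ in $\V$, form the free product $M+N$ in $\Mon$ and let $R$ be the congruence generated by all substitution instances (with arguments drawn from $M+N$) of the defining equations of $\V$. The quotient $(M+N)/R$ is a monoid satisfying the defining equations of $\V$, hence lies in $\V$ by Birkhoff's theorem, every identity valid in $\V$ being a consequence of the defining ones. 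Its universal property is then immediate: any pair of $\V$-morphisms $M \to Q$, $N \to Q$ induces a unique homomorphism $M+N \to Q$ in $\Mon$, which, since $Q$ satisfies the defining equations, respects $R$ and therefore factors uniquely through $(M+N)/R$. Having a zero object, binary coproducts, and coequalizers yields all finite colimits.

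The only genuine point to check is that these quotients land back in $\V$: for coequalizers this is precisely closure under homomorphic images, and for coproducts it is where Birkhoff's characterization of varieties is used; so I expect no real obstacle. A cleaner alternative, if preferred, is to invoke the presentation $\V \simeq T'\Alg$ for a Lawvere theory $T'$ (Birkhoff again): the category of algebras of a finitary algebraic theory is cocomplete, and indeed $\V$ is a reflective subcategory of $\Mon$, so its colimits are obtained by reflecting the corresponding colimits of $\Mon$. Either route makes the statement routine bookkeeping, consistent with the remark that it follows immediately from the definitions.
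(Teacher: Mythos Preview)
Your proof is correct and is essentially a fully fleshed-out version of what the paper leaves implicit: the paper offers no argument beyond ``follows immediately from the definitions,'' having already described the coproduct in a variety as the quotient of the free product by the congruence generated by the defining equations in the paragraph preceding the lemma. Your explicit verification of the zero object, coequalizers, and binary coproducts (together with the alternative via $T'\Alg$ being cocomplete) is exactly the routine bookkeeping the paper alludes to.
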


This lemma tells us that it makes sense to talk about Green products in a variety, which we call \emph{varietal Green products}. In the next section, we use varietal Green products with Kneser graphs to construct network models.

\section{Functorial Network Models}
\label{sec:funnetmod}

In this section, we state and prove the main result of this paper.
It says that given a monoid $M$ in a variety $\V$, we can construct a network model whose constituent monoids are also in $\V$, while avoiding to impose commutativity relations when possible. 
In the following section, we see how this construction resolves the dilemma presented in Question \ref{question}.

Let $M$ be a monoid in a variety $\V$. Define $\GMV(n)$ to be the $KG_{n,2}$ Green product of $\binom{n}{2}$ copies of $M$.

\begin{thm} 
\label{thm:main}
    For $\V$ a variety of monoids, $\G_{-,\V} \maps \V \to \NM_\V$ is a functor, as given above. The network model $\GMV$ is called the \define{$\V$-varietal network model for $M$-weighted graphs}, or just the \define{varietal $M$ network model}.
\end{thm}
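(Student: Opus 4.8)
The plan is to verify, in order, the three things needed: (1) that each $\GMV(n)$ is an object of $\V$ and carries an $S_n$-action, so that $\GMV$ is a functor $\S\to\V$; (2) that the maps $\Psi_{\m,\n}$ on Kneser graphs induce monoid homomorphisms $\GMV(m)\times\GMV(n)\to\GMV(m+n)$ forming a laxator, so that $\GMV$ is a $\V$-valued network model; and (3) that $\G_{-,\V}$ is functorial in $M$, i.e.\ a homomorphism $f\maps M\to M'$ in $\V$ induces a morphism of network models, compatibly with composition and identities.

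For (1): each $\GMV(n)$ is by definition a varietal Green product, hence a colimit of the diagram $D_n\maps F(IC(KG_{n,2}))\to\V$ taken \emph{in} $\V$; since $\V$ has finite colimits (the lemma on varieties being pointed with finite colimits) and is closed under homomorphic images, $\GMV(n)\in\V$ automatically. For the $S_n$-action, recall the remark that $\Aut(G)$ acts on $G^\C(A_v)$ because $F\circ IC$ is a functor; here $S_n$ acts on $\binom{\n}{2}$ by $\bink{-}$ applied to permutations (the functoriality lemma for $\bink{-}$), hence on $KG_{n,2}$ by graph automorphisms permuting all $\binom n2$ components of $M$ uniformly, so we get an $S_n$-action on $\GMV(n)$ by monoid automorphisms. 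Functoriality of $\GMV$ on $\S$ is then precisely functoriality of $F\circ IC\circ KG_{-,2}$ restricted to the groupoid $\S\subseteq\FinInj$, followed by the colimit, which is functorial in the diagram.

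For (2): the laxator component $\Phi_{\m,\n}\maps\GMV(m)\times\GMV(n)\to\GMV(m+n)$ should be induced from the map of quivers, equivalently from $\Psi_{\m,\n}\maps KG_{m,2}+KG_{n,2}\to KG_{m+n,2}$. Concretely, $\GMV(m)\times\GMV(n)$ is the $(KG_{m,2}+KG_{n,2})$ Green product of $M$-copies (Green products take coproducts of graphs to products of the Green products, since the disjoint union of the index diagrams has colimit the product), and $\Psi_{\m,\n}$ is an embedding of graphs, so it induces a homomorphism on Green products landing in $\GMV(m+n)$. Naturality in $\m,\n$, the unit condition, and the associativity/symmetry coherence axioms for $(\GMV,\Phi)$ are all inherited from the corresponding properties of $(KG_{-,2},\Psi)$ established in the preceding proposition, via functoriality of $F\circ IC$ and of $\colim$. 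One must also check that these $\Phi_{\m,\n}$ are the components of a \emph{monoidal} natural transformation between the network models $\GMV$ and $\GMV[M']$ when we vary $M$, but that is part of (3).

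For (3): a homomorphism $f\maps M\to M'$ in $\V$ gives, for each $n$, a morphism of diagrams $D_n\To D'_n$ (it is $f$ on each vertex object $M\to M'$ and $f\times f$ on each edge object $M\times M\to M'\times M'$, which commutes with the inclusions since $(f\times f)\circ i = i\circ f$ in any pointed category with products), hence a homomorphism $\G_{f,\V}(n)\maps\GMV(n)\to\GMV[M'](n)$ on colimits; these are $S_n$-equivariant and commute with the laxators by naturality of the colimit construction, so they assemble into a morphism of network models $\G_{f,\V}\maps\GMV\to\GMV[M']$. Preservation of identities and composition is immediate from the corresponding facts for $M\mapsto D_n$ and for $\colim$.

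I expect the main obstacle to be step (2): pinning down the identification $\GMV(m)\times\GMV(n)\cong(KG_{m,2}+KG_{n,2})^{\V}(M)$ and checking that under this identification the induced map of $\Psi_{\m,\n}$ genuinely is a homomorphism of monoids in $\V$ (not merely of underlying monoids) and satisfies the laxator coherence diagrams. The subtlety is that colimits in $\V$ are computed as quotients of colimits in $\Mon$, so one has to be careful that the universal maps out of the varietal Green product — rather than the plain one — are the ones being used; once the colimit-in-$\V$ bookkeeping is set up, everything reduces, as the paper's phrasing "inherited immediately from $\Phi$" suggests, to transporting the already-proven structure on $(KG_{-,2},\Psi)$ through the colimit-preserving functor $F\circ IC$ and the (colimit-by-construction) Green product.
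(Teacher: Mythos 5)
Your steps (1) and (3) are fine and agree with the paper's treatment, but step (2) contains a genuine error that sits exactly where the real work of the theorem lies. The parenthetical claim that ``Green products take coproducts of graphs to products of the Green products'' is false: the colimit of a disjoint union of diagrams is the \emph{coproduct} of the colimits, so $(\KG_{m,2}+\KG_{n,2})^{\V}(M) \cong \GMV(\m) + \GMV(\n)$, the coproduct in $\V$, which for noncommutative $M$ is not $\GMV(\m)\times\GMV(\n)$. (If it were the product, the ordinary and varietal constructions would coincide for every variety, and there would be nothing to prove.) Consequently the embedding $\Psi_{\m,\n}$ only hands you a homomorphism $\phi\maps \GMV(\m)+\GMV(\n)\to\GMV(\m+\n)$ out of the coproduct, and nothing in your argument produces the laxator, which must be a monoid homomorphism out of the \emph{product}.

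The missing idea is the content of the paper's key lemma: define $g\sqcup h = \phi(i_1(g)\cup i_2(h))$, i.e.\ precompose $\phi$ with the set-map $\GMV(\m)\times\GMV(\n)\to\GMV(\m)+\GMV(\n)$ that multiplies the two canonical inclusions; this map is not a homomorphism, so one must prove directly that the composite is. That proof cannot be ``inherited from $\Psi$'': it uses Proposition \ref{prop:knesercomm}, namely the \emph{extra} edges of $\KG_{m+n,2}$ joining a vertex supported in the first $m$ points to one supported in the last $n$ (such pairs of edges are disjoint, hence adjacent in the Kneser graph), which are precisely the edges absent from $\KG_{m,2}+\KG_{n,2}$. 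These force the images of $\phi\circ i_1$ and $\phi\circ i_2$ to commute elementwise in $\GMV(\m+\n)$, and that commutation is what yields $(g\sqcup h)\cup(g'\sqcup h')=(g\cup g')\sqcup(h\cup h')$ --- exactly the Eckmann--Hilton-style constraint identified in Section \ref{sec:netmods}. Your proposal needs this lemma inserted into step (2); without it the laxator is not even defined as a homomorphism, and the rest of the argument (which otherwise matches the paper's) does not go through.
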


In order to prove this, we must first show that a monoid $M$ gives a network model, i.e.\ a lax symmetric monoidal functor. The laxator for $\GMV$ is canonically defined, but perhaps it is not as immediate as the one for the ordinary $M$ network model. We treat this first before returning to the proof of the main theorem.

Let $A$ and $B$ be objects in a pointed category with finite products and coproducts. Let $p_A \maps A \times B \to A$ and $p_B \maps A \times B \to B$ denote the canonical projections, and $i_A \maps A \to A+B$ and $i_B \maps B \to A + B$ the canonical inclusions.
The category $\CMon$ of commutative monoids is such a category. Recall that the operation of a monoid is a monoid homomorphism if and only if the monoid is commutative.
We have
\[
\begin{tikzcd}
    &
    A \times B
    \arrow[dl, "p_A", swap]
    \arrow[dr, "p_B"]
    \arrow[d, dashed]
    \\
    A
    \arrow[d, "i_A", swap]
    &
    (A + B) \times (A + B)
    \arrow[d,"\ast"]
    \arrow[dl]
    \arrow[dr]
    &
    B
    \arrow[d, "i_B"]
    \\
    A + B
    &
    A + B
    &
    A + B
\end{tikzcd}
\]
where $\ast$ denotes the operation in the commutative monoid $A+B$, and the dashed arrow is $<i_A p_A, i_B p_B>$ given by universal property.
The composite of the two maps going down the middle is the inverse to the canonical map $A + B \to A \times B$.
The operation in a noncommutative monoid is not a monoid homomorphism, but all the above maps still exist \emph{as functions}.
Recall that we let $\cup$ denote the operation in the monoids $\GMV(\n)$. There is always a homomorphism $\phi_{\m,\n} \maps \GMV(\m) + \GMV(\n) \to \GMV(\m+\n)$ by universal property of coproducts. 
Let \[\gamma \maps (\GMV(\m) + \GMV(\n)) \times (\GMV(\m) + \GMV(\n)) \to \GMV(\m) + \GMV(\n)\] denote the monoid operation of the coproduct.

\adjustbox{scale = 0.75, center}{
\begin{math}\begin{tikzcd}[row sep = 50]
    &
    \GMV(\m) \times \GMV(\n)
    \arrow[dl, "p_1", swap]
    \arrow[dr, "p_2"]
    \arrow[d, dashed]
    \\
    \GMV(\m)
    \arrow[d, "i_1", swap]
    &
    (\GMV(\m)+\GMV(\n))\times(\GMV(\m)+\GMV(\n))
    \arrow[d,"\gamma"]
    \arrow[dl]
    \arrow[dr]
    &
    \GMV(\n)
    \arrow[d, "i_2"]
    \\
    \GMV(\m)+\GMV(\n)
    &
    \GMV(\m)+\GMV(\n)
    \arrow[d, "\phi"]
    &
    \GMV(\m)+\GMV(\n)
    \\&
    \GMV(\m+\n)
\end{tikzcd}\end{math}
}
The monoids $\GMV(n)$ are constructed specifically so that $\phi \circ \gamma \circ <i_1 \circ p_1, i_2 \circ p_2>$ is a monoid homomorphism despite the fact that $\gamma$ is not.

In the proof of the following theorem, we utilize a string diagrammatic calculus suited for reasoning in a symmetric monoidal category. We refer the reader to Selinger's thorough exposition of such string diagramatic languages and their use in category theory \cite{Selinger}.

\begin{lem}
    The function $\GMV(\m) \times \GMV(\n) \to \GMV(\m + \n)$ given by $\phi \circ (i_1 \circ p_1 \cup i_2 \circ p_2)$ is a monoid homomorphism. Moreover, the family of maps of this form gives a natural transformation, denoted $\sqcup$.
\end{lem}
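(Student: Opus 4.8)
The plan is to verify the homomorphism property using the string-diagrammatic calculus, exploiting the one fact that was built into the monoids $\GMV(\n)$ by construction: elements coming from "disjoint" edges commute. Write $h = \phi \circ (i_1 \circ p_1 \cup i_2 \circ p_2)$. To check $h$ preserves the operation, I must show that for $(a,b), (c,d) \in \GMV(\m) \times \GMV(\n)$ we have $h\bigl((a,b)\cup(c,d)\bigr) = h(a,b) \cup h(c,d)$, i.e.\ $h(a\cup c,\, b\cup d) = h(a,b)\cup h(c,d)$. Unwinding the definition, $h(a,b) = \phi\bigl(i_1(a)\cup i_2(b)\bigr)$, so the left side is $\phi\bigl(i_1(a\cup c)\cup i_2(b\cup d)\bigr) = \phi\bigl(i_1(a)\cup i_1(c)\cup i_2(b)\cup i_2(d)\bigr)$, using that $i_1,i_2,\phi$ are homomorphisms, while the right side is $\phi\bigl(i_1(a)\cup i_2(b)\cup i_1(c)\cup i_2(d)\bigr)$. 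So the entire content of the lemma is the identity
\[
    \phi\bigl(i_1(a)\cup i_1(c)\cup i_2(b)\cup i_2(d)\bigr) = \phi\bigl(i_1(a)\cup i_2(b)\cup i_1(c)\cup i_2(d)\bigr),
\]
which will follow once we know that $\phi(i_1(c))$ commutes with $\phi(i_2(b))$ inside $\GMV(\m+\n)$ — i.e.\ that $i_1(c)$ and $i_2(b)$ commute after applying $\phi$.

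The key step is therefore to observe that $\phi\circ i_1$ factors through the canonical inclusion of the Green product $KG_{m,2}(M) = \GMV(\m)$ into $KG_{m+n,2}(M)$ induced by the graph embedding $\Psi_{\m,\n}$ (equivalently, by the coproduct-summand inclusion of vertex sets $\binom{\m}{2}\hookrightarrow \binom{\m+\n}{2}$ coming from $\Phi_{\m,\n}$ of Lemma~\ref{lem:chooselax}), and similarly $\phi\circ i_2$ is the inclusion of the summand on vertices indexed by pairs $>m$. An element in the image of $\phi\circ i_1$ is a word in components indexed by edges $\{p_1,q_1\}$ with $p_1,q_1\le m$; an element in the image of $\phi\circ i_2$ is a word in components indexed by edges $\{p_2,q_2\}$ with $p_2,q_2>m$. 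Any such pair of edges is disjoint, hence adjacent in $KG_{m+n,2}$, so by Proposition~\ref{prop:knesercomm} (applied with $G_1 = KG_{m,2}$, $G_2 = KG_{n,2}$, whose disjoint union embeds into $KG_{m+n,2}$) the images commute letter-by-letter, hence as words. This gives the displayed identity and so $h$ is a monoid homomorphism; I will draw this commutation as the string-diagram move that slides the $i_2(b)$ strand past the $i_1(c)$ strand.

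For naturality, I take a morphism $(\sigma,\tau)\maps (\m,\n)\to(\m',\n')$ in $\S\times\S$ — i.e.\ permutations — and must check that the square relating $\sqcup_{\m,\n}$ and $\sqcup_{\m',\n'}$ along $\GMV(\sigma)\times\GMV(\tau)$ and $\GMV(\sigma+\tau)$ commutes. Since every piece in the definition of $\sqcup$ is natural in the obvious way — $p_1,p_2$ are the product projections, $i_1,i_2$ the coproduct inclusions (natural by the universal properties), and $\phi$ is defined by the universal property of the coproduct so that it commutes with the maps induced by $\sigma,\tau$ on the summands — the square commutes by pasting the naturality squares of each piece. Concretely: $\GMV$ is a functor on $\S$ because $F\circ IC\circ KG_{-,2}$ is functorial and colimits are functorial, so permutations act compatibly; and $\phi_{\m',\n'}\circ(\GMV(\sigma)+\GMV(\tau)) = \GMV(\sigma+\tau)\circ\phi_{\m,\n}$ is exactly the statement that $\phi$ is a natural family, which again follows from the universal property of coproducts together with functoriality of $\GMV$.

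The main obstacle is the first part — establishing the commutation $\phi(i_1(c))\cup\phi(i_2(b)) = \phi(i_2(b))\cup\phi(i_1(c))$ — and the real work there is purely bookkeeping: one must identify $\phi\circ i_1$ and $\phi\circ i_2$ with the correct submonoid inclusions into the Green product over $KG_{m+n,2}$, so that Proposition~\ref{prop:knesercomm} applies. Once that identification is made the rest is formal manipulation of homomorphisms (and, for naturality, routine diagram pasting), which the string-diagram calculus makes transparent.
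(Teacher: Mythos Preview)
Your proposal is correct and follows essentially the same route as the paper: both reduce the homomorphism property to the commutation $\phi(i_1(c))\cup\phi(i_2(b)) = \phi(i_2(b))\cup\phi(i_1(c))$ supplied by Proposition~\ref{prop:knesercomm}, and then use that $\phi$, $i_1$, $i_2$ are homomorphisms to finish. The only difference is presentational---the paper carries out the same chain of equalities via string diagrams in $(\Set,\times)$ rather than element-wise, and checks naturality by the same direct computation you sketch.
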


\begin{proof}
    We have the following actors in play:
    \begin{itemize}
        \item the monoid operations $\cup_k \maps \GMV(\mathbf k)$ for $k = m, n, m+n$ (we leave off the subscripts below)
        \item the monoid operation of the coproduct \[\gamma \maps (\GMV(\m) + \GMV(\n)) \times (\GMV(\m) + \GMV(\n)) \to \GMV(\m) + \GMV(\n)\]
        \item the canonical inclusion maps $i_1 \maps \GMV(\m) \to \GMV(\m) \to \GMV(\n)$ and $i_2 \maps \GMV(\n) \to \GMV(\m) \to \GMV(\n)$
        \item the canonical map $\phi \maps \GMV(\m) + \GMV(\n) \to \GMV(\m+\n)$
    \end{itemize}
    We represent these string diagramatically (read from top to bottom) as follows. Note that these are digrams in $\Set$ with its cartesian monoidal structure, because the monoid operations $\cup_k$ and $\gamma$ are not necessarily monoid homomorphisms.
    \[
    \begin{tikzpicture}
    \begin{pgfonlayer}{nodelayer}
    	\node [style=construct] (U) at (0, 0) {$\cup$};
        \node [style=none] (1) at (-0.5, 1) {};
    	\node [style=none] (2) at (-0.5, 0.5) {};
    	\node [style=none] (3) at (0.5, 0.5) {};
    	\node [style=none] (4) at (0, -1) {};
    	\node [style=none] (5) at (0.5, 1) {};
        \node [style=none] () at (0.75, 0) {,};
        \node [style=none] () at (1, 0) {}; 
    \end{pgfonlayer}
    \begin{pgfonlayer}{edgelayer}
    	\draw [bend right] (2.center) to (U);
    	\draw [bend left] (3.center) to (U);
    	\draw [] (2.center) to (1);
    	\draw [] (3.center) to (5);
    	\draw (4.center) to (U);
    \end{pgfonlayer}
    \end{tikzpicture}
    \begin{tikzpicture}
    \begin{pgfonlayer}{nodelayer}
    	\node [style=construct] (g) at (0, 0) {$\gamma$};
        \node [style=none] (1) at (-0.5, 1) {};
    	\node [style=none] (2) at (-0.5, 0.5) {};
    	\node [style=none] (3) at (0.5, 0.5) {};
    	\node [style=none] (4) at (0, -1) {};
    	\node [style=none] (5) at (0.5, 1) {};
        \node [style=none] () at (0.75, 0) {,};
        \node [style=none] () at (1, 0) {}; 
    \end{pgfonlayer}
    \begin{pgfonlayer}{edgelayer}
    	\draw [bend right] (2.center) to (g);
    	\draw [bend left] (3.center) to (g);
    	\draw [] (2.center) to (1);
    	\draw [] (3.center) to (5);
    	\draw (4.center) to (g);
    \end{pgfonlayer}
    \end{tikzpicture}
    \begin{tikzpicture}
    \begin{pgfonlayer}{nodelayer}
    	\node [style=construct] (1) at (0, 0) {$i_1$};
    	\node [style=none] (2) at (0, 1) {};
    	\node [style=none] (3) at (0, -1) {};
        \node [style=none] (5) at (0.75, 0) {,};
        \node [style=none] (5) at (1, 0) {}; 
    \end{pgfonlayer}
    \begin{pgfonlayer}{edgelayer}
    	\draw (2.center) to (1);
    	\draw (3.center) to (1);
    \end{pgfonlayer}
    \end{tikzpicture}
    \begin{tikzpicture}
    \begin{pgfonlayer}{nodelayer}
    	\node [style=construct] (1) at (0, 0) {$i_2$};
    	\node [style=none] (2) at (0, 1) {};
    	\node [style=none] (3) at (0, -1) {};
        \node [style=none] (5) at (0.75, 0) {,};
        \node [style=none] (5) at (1, 0) {}; 
    \end{pgfonlayer}
    \begin{pgfonlayer}{edgelayer}
    	\draw (2.center) to (1);
    	\draw (3.center) to (1);
    \end{pgfonlayer}
    \end{tikzpicture}
    \begin{tikzpicture}
    \begin{pgfonlayer}{nodelayer}
    	\node [style=construct] (1) at (0, 0) {$\phi$};
    	\node [style=none] (2) at (0, 1) {};
    	\node [style=none] (3) at (0, -1) {};
    \end{pgfonlayer}
    \begin{pgfonlayer}{edgelayer}
    	\draw (2.center) to (1);
    	\draw (3.center) to (1);
    \end{pgfonlayer}
    \end{tikzpicture}
    \]
    We define $\sqcup \maps \GMV(\m) \times \GMV(\n) \to \GMV(\m + \n)$ as follows.
    \begin{equation}\label{def:disjoint}
    \begin{tikzpicture}[baseline=(current bounding  box.center)]
    \begin{pgfonlayer}{nodelayer}
    	\node [style=construct] (U) at (0, 0) {$\sqcup$};
    	\node [style=none] (1) at (-0.5, 1) {};
    	\node [style=none] (2) at (0.5, 1) {};
    	\node [style=none] (3) at (-0.5, 0.5) {};
    	\node [style=none] (4) at (0.5, 0.5) {};
    	\node [style=none] (5) at (0, -1) {};
        \node [style=none] () at (1.2, 0) {=};
        \node [style=none] () at (1.8, 0) {}; 
        \node [style=none] () at (0, -1.6) {}; 
    \end{pgfonlayer}
    \begin{pgfonlayer}{edgelayer}
    	\draw [bend right] (3.center) to (U);
    	\draw [bend left] (4.center) to (U);
    	\draw [] (3.center) to (1);
    	\draw [] (4.center) to (2);
    	\draw (5.center) to (U);
    \end{pgfonlayer}
    \end{tikzpicture}
    \begin{tikzpicture}[baseline=(current bounding  box.center)]
    \begin{pgfonlayer}{nodelayer}
    	\node [style=none] (4) at (0, -1) {};
    	\node [style=construct] (6) at (-0.5, 1) {$\phi$};
    	\node [style=construct] (7) at (0.5, 1) {$\phi$};
    	\node [style=construct] (15) at (0, 0) {$\cup$};
    	\node [style=none] (16) at (-0.5, 3) {};
    	\node [style=none] (17) at (0.5, 3) {};
    	\node [style=construct] (18) at (-0.5, 2) {$i_1$};
    	\node [style=construct] (19) at (0.5, 2) {$i_2$};
    \end{pgfonlayer}
    \begin{pgfonlayer}{edgelayer}
    	\draw (15) to (4.center);
    	\draw [bend right] (6) to (15);
    	\draw [bend left] (7) to (15);
    	\draw (18) to (6);
    	\draw (19) to (7);
    	\draw (16.center) to (18);
    	\draw (17.center) to (19);
    \end{pgfonlayer}
    \end{tikzpicture}
    \end{equation}
    Proposition \ref{prop:knesercomm} gives the following equation.
    \begin{equation}\label{prop13}
    \begin{tikzpicture}[baseline=(current  bounding  box.center)]
    \begin{pgfonlayer}{nodelayer}
    	\node [style=none] (4) at (0, -1) {};
    	\node [style=construct] (6) at (-0.5, 1) {$\phi$};
    	\node [style=construct] (7) at (0.5, 1) {$\phi$};
    	\node [style=construct] (15) at (0, 0) {$\cup$};
    	\node [style=none] (16) at (-0.5, 3) {};
    	\node [style=none] (17) at (0.5, 3) {};
    	\node [style=construct] (18) at (-0.5, 2) {$i_2$};
    	\node [style=construct] (19) at (0.5, 2) {$i_1$};
    	\node [style=none] (1) at (1.8, 1) {=};
    \end{pgfonlayer}
    \begin{pgfonlayer}{edgelayer}
    	\draw (15) to (4.center);
    	\draw [bend right] (6) to (15);
    	\draw [bend left] (7) to (15);
    	\draw (18) to (6);
    	\draw (19) to (7);
    	\draw (16.center) to (18);
    	\draw (17.center) to (19);
    \end{pgfonlayer}
    \end{tikzpicture}
    \begin{tikzpicture}[baseline=(current  bounding  box.center)]
    \begin{pgfonlayer}{nodelayer}
        \node [style=none] () at (-1.8, 1) {}; 
    	\node [style=construct] (f1) at (-0.5, 1) {$\phi$};
    	\node [style=construct] (f2) at (0.5, 1) {$\phi$};
    	\node [style=construct] (U) at (0, 0) {$\cup$};
    	\node [style=construct] (i1) at (-0.5, 2) {$i_1$};
    	\node [style=construct] (i2) at (0.5, 2) {$i_2$};
    	\node [style=none] (1) at (0, -1) {};
    	\node [style=none] (2) at (-0.5, 2.5) {};
    	\node [style=none] (3) at (0.5, 2.5) {};
    	\node [style=none] (4) at (0, 3) {};
    	\node [style=none] (5) at (-0.5, 3.5) {};
    	\node [style=none] (6) at (0.5, 3.5) {};
    \end{pgfonlayer}
    \begin{pgfonlayer}{edgelayer}
    	\draw [bend right] (5) to (4.center);
    	\draw [bend left] (6) to (4.center);
    	\draw [bend left] (4.center) to (3.center);
    	\draw [bend right] (4.center) to (2.center);
    	\draw (2.center) to (i1);
    	\draw (3.center) to (i2);
    	\draw (i1) to (f1);
    	\draw (i2) to (f2);
    	\draw [bend right] (f1) to (U);
    	\draw [bend left] (f2) to (U);
    	\draw (U) to (1);
    \end{pgfonlayer}
    \end{tikzpicture}
    \end{equation}
    Since $\phi$ is a homomorphism, we get the following equation.
    \begin{equation}\label{phiishom}
    \begin{tikzpicture}[baseline=(current  bounding  box.center)]
    \begin{pgfonlayer}{nodelayer}
    	\node [style=construct] (1) at (0, 0) {$\cup$};
    	\node [style=construct] (2) at (-0.5, 1) {$\phi$};
    	\node [style=construct] (3) at (0.5, 1) {$\phi$};
    	\node [style=none] (4) at (-0.5, 2) {};
    	\node [style=none] (5) at (0.5, 2) {};
    	\node [style=none] (6) at (0, -1) {};
    	\node [style=none] (7) at (1.8, 0.5) {=};
    	\node [style=none] (8) at (2.8, 0) {};
    \end{pgfonlayer}
    \begin{pgfonlayer}{edgelayer}
    	\draw (4) to (2);
    	\draw (5) to (3);
    	\draw [bend right] (2) to (1);
    	\draw [bend left] (3) to (1);
    	\draw (1) to (6);
    \end{pgfonlayer}
    \end{tikzpicture}
    \begin{tikzpicture}[baseline=(current  bounding  box.center)]
    \begin{pgfonlayer}{nodelayer}
    	\node [style=construct] (g) at (0, 1) {$\gamma$};
    	\node [style=construct] (f) at (0, 0) {$\phi$};
    	\node [style=none] (1) at (-0.5, 2) {};
    	\node [style=none] (2) at (0.5, 2) {};
    	\node [style=none] (3) at (-0.5, 1.5) {};
    	\node [style=none] (4) at (0.5, 1.5) {};
    	\node [style=none] (5) at (0, -1) {};
    \end{pgfonlayer}
    \begin{pgfonlayer}{edgelayer}
        \draw (1) to (3.center);
        \draw (2) to (4.center);
    	\draw [bend right] (3.center) to (g);
    	\draw [bend left] (4.center) to (g);
    	\draw (g) to (f);
    	\draw (f) to (5);
    \end{pgfonlayer}
    \end{tikzpicture}
    \end{equation}
    Since $i_1$ and $i_2$ are homomorphisms, we get the following equations.
    \begin{equation}\label{isarehoms}
    \begin{tikzpicture}[baseline=(current  bounding  box.center)]
    \begin{pgfonlayer}{nodelayer}
    	\node [style=construct] (1) at (0, 0) {$\gamma$};
    	\node [style=construct] (2) at (-0.5, 1) {$i_j$};
    	\node [style=construct] (3) at (0.5, 1) {$i_j$};
    	\node [style=none] (4) at (-0.5, 2) {};
    	\node [style=none] (5) at (0.5, 2) {};
    	\node [style=none] (6) at (0, -1) {};
    	\node [style=none] (7) at (1.8, 0.5) {=};
    	\node [style=none] (8) at (2.8, 0) {};
    \end{pgfonlayer}
    \begin{pgfonlayer}{edgelayer}
    	\draw (4) to (2);
    	\draw (5) to (3);
    	\draw [bend right] (2) to (1);
    	\draw [bend left] (3) to (1);
    	\draw (1) to (6);
    \end{pgfonlayer}
    \end{tikzpicture}
    \begin{tikzpicture}[baseline=(current  bounding  box.center)]
    \begin{pgfonlayer}{nodelayer}
    	\node [style=construct] (g) at (0, 1) {$\gamma$};
    	\node [style=construct] (f) at (0, 0) {$i_j$};
    	\node [style=none] (1) at (-0.5, 2) {};
    	\node [style=none] (2) at (0.5, 2) {};
    	\node [style=none] (3) at (-0.5, 1.5) {};
    	\node [style=none] (4) at (0.5, 1.5) {};
    	\node [style=none] (5) at (0, -1) {};
    \end{pgfonlayer}
    \begin{pgfonlayer}{edgelayer}
        \draw (1) to (3.center);
        \draw (2) to (4.center);
    	\draw [bend right] (3.center) to (g);
    	\draw [bend left] (4.center) to (g);
    	\draw (g) to (f);
    	\draw (f) to (5);
    \end{pgfonlayer}
    \end{tikzpicture}
    \end{equation}
    
    We want to show that $(g \sqcup h) \cup (g' \sqcup h') = (g \cup g') \sqcup (h \cup h')$. We compute:
    \[
    \begin{tikzpicture}
    \begin{pgfonlayer}{nodelayer}
    	\node [style=construct] (s1) at (-0.75, 1) {$\sqcup$};
    	\node [style=construct] (s2) at (0.75, 1) {$\sqcup$};
    	\node [style=construct] (U) at (0, 0) {$\cup$};
    	\node [style=none] (1) at (-1.25, 1.5) {};
    	\node [style=none] (2) at (0.25, 1.5) {};
    	\node [style=none] (3) at (-0.25, 1.5) {};
    	\node [style=none] (4) at (1.25, 1.5) {};
    	\node [style=none] (5) at (0, -1) {};
    	\node [style=none] (6) at (-1.25, 2) {};
    	\node [style=none] (7) at (0.25, 2) {};
    	\node [style=none] (8) at (-0.25, 2) {};
    	\node [style=none] (9) at (1.25, 2) {};
    	\node [style=none] () at (2.2, 0.5) {=};
    	\node [style=none] () at (2.2, 0.9) {(\ref{def:disjoint})};
    	\node [style=none] () at (2.8, 1) {}; 
    \end{pgfonlayer}
    \begin{pgfonlayer}{edgelayer}
        \draw (6) to (1.center);
        \draw (8) to (3.center);
        \draw (7) to (2.center);
        \draw (9) to (4.center);
        \draw [bend right] (1.center) to (s1);
        \draw [bend left] (3.center) to (s1);
        \draw [bend right] (2.center) to (s2);
        \draw [bend left] (4.center) to (s2);
        \draw [bend right] (s1) to (U);
        \draw [bend left] (s2) to (U);
        \draw (U) to (5);
    \end{pgfonlayer}
    \end{tikzpicture}
    \begin{tikzpicture}
    \begin{pgfonlayer}{nodelayer}
    	\node [style=construct] (U1) at (-1, -1) {$\cup$};
    	\node [style=construct] (U2) at (1, -1) {$\cup$};
    	\node [style=construct] (U3) at (0, -2) {$\cup$};
    	\node [style=construct] (i1) at (-1.5, 1) {$i_1$};
    	\node [style=construct] (i2) at (-0.5, 1) {$i_2$};
    	\node [style=construct] (i1') at (0.5, 1) {$i_1$};
    	\node [style=construct] (i2') at (1.5, 1) {$i_2$};
    	\node [style=construct] (f1) at (-1.5, 0) {$\phi$};
    	\node [style=construct] (f2) at (-0.5, 0) {$\phi$};
    	\node [style=construct] (f3) at (0.5, 0) {$\phi$};
    	\node [style=construct] (f4) at (1.5, 0) {$\phi$};
    	\node [style=none] (1) at (-1.5, 2) {};
    	\node [style=none] (2) at (0.5, 2) {};
    	\node [style=none] (3) at (-0.5, 2) {};
    	\node [style=none] (4) at (1.5, 2) {};
    	\node [style=none] (5) at (0, -3) {};
    \end{pgfonlayer}
    \begin{pgfonlayer}{edgelayer}
    	\draw (1) to (i1);
    	\draw (3) to (i2);
    	\draw (2) to (i1');
    	\draw (4) to (i2');
    	\draw (i1) to (f1);
    	\draw (i2) to (f2);
    	\draw (i1') to (f3);
    	\draw (i2') to (f4);
    	\draw [bend right] (f1) to (U1);
    	\draw [bend left] (f2) to (U1);
    	\draw [bend right] (f3) to (U2);
    	\draw [bend left] (f4) to (U2);
    	\draw [bend right = 40] (U1) to (U3);
    	\draw [bend left = 40] (U2) to (U3);
    	\draw (U3) to (5);
    \end{pgfonlayer}
    \end{tikzpicture}
    \]\[
    \begin{tikzpicture}
    \begin{pgfonlayer}{nodelayer}
    	\node [style=construct] (i1) at (-1.5, 2) {$i_1$};
    	\node [style=construct] (i2) at (-0.5, 2) {$i_2$};
    	\node [style=construct] (i1') at (0.5, 2) {$i_1$};
    	\node [style=construct] (i2') at (1.5, 2) {$i_2$};
    	\node [style=construct] (f1) at (-1.5, 1) {$\phi$};
    	\node [style=construct] (f2) at (-0.5, 1) {$\phi$};
    	\node [style=construct] (f3) at (0.5, 1) {$\phi$};
    	\node [style=construct] (f4) at (1.5, 1) {$\phi$};
    	\node [style=construct] (U1) at (0, 0) {$\cup$};
    	\node [style=construct] (U2) at (-0.75, -0.75) {$\cup$};
    	\node [style=construct] (U3) at (0.375, -1.75) {$\cup$};
    	\node [style=none] (1) at (-1.5, 3) {};
    	\node [style=none] (2) at (-0.5, 3) {};
    	\node [style=none] (3) at (0.5, 3) {};
    	\node [style=none] (4) at (1.5, 3) {};
    	\node [style=none] (5) at (0.375, -2.75) {};
    	\node [style=none] (6) at (-1.5, -0.25) {};
    	\node [style=none] (7) at (0, -0.25) {};
    	\node [style=none] (8) at (1.5, -1) {};
    	\node [style=none] (9) at (-0.75, -1) {};
    	\node [style=none] () at (2.5, 0) {=};
    	\node [style=none] () at (2.5, 0.4) {(\ref{prop13})};
    	\node [style=none] () at (-2.5, 0) {=};
    	\node [style=none] () at (3.3, 0) {}; 
    \end{pgfonlayer}
    \begin{pgfonlayer}{edgelayer}
        \draw (1) to (i1);
        \draw (2) to (i2);
        \draw (3) to (i1');
        \draw (4) to (i2');
        \draw (i1) to (f1);
        \draw (i2) to (f2);
        \draw (i1') to (f3);
        \draw (i2') to (f4);
        \draw (f1) to (6.center);
        \draw [bend right] (f2) to (U1);
        \draw [bend left] (f3) to (U1);
        \draw [bend right] (6.center) to (U2);
        \draw (U1) to (7.center);
        \draw [bend left] (7.center) to (U2);
        \draw (U2) to (9.center);
        \draw [bend right = 40] (9.center) to (U3);
        \draw (f4) to (8.center);
        \draw [bend left = 40] (8.center) to (U3);
        \draw (U3) to (5);
    \end{pgfonlayer}
    \end{tikzpicture}
    \begin{tikzpicture}
	\begin{pgfonlayer}{nodelayer}
		\node [style=construct] (3) at (-1.5, 0.75) {$i_1$};
		\node [style=construct] (4) at (0.5, 0.75) {$i_2$};
		\node [style=construct] (5) at (-0.5, 0.75) {$i_1$};
		\node [style=construct] (6) at (1.5, 0.75) {$i_2$};
		\node [style=construct] (17) at (-1.5, -0.15) {$\phi$};
		\node [style=construct] (18) at (-0.5, -0.15) {$\phi$};
		\node [style=construct] (19) at (0.5, -0.15) {$\phi$};
		\node [style=construct] (20) at (1.5, -0.15) {$\phi$};
		\node [style=construct] (21) at (0, -1) {$\cup$};
		\node [style=construct] (22) at (-0.75, -1.75) {$\cup$};
		\node [style=construct] (23) at (0.375, -2.5) {$\cup$};
		\node [style=none] (8) at (-1.5, 2) {};
		\node [style=none] (9) at (0.25, 1.25) {};
		\node [style=none] (11) at (-0.25, 1.25) {};
		\node [style=none] (12) at (1.5, 2) {};
		\node [style=none] (13) at (-0.25, 1.5) {};
		\node [style=none] (14) at (0.25, 1.5) {};
		\node [style=none] (15) at (-0.5, 2) {};
		\node [style=none] (16) at (0.5, 2) {};
		\node [style=none] (24) at (0.375, -3.5) {};
		\node [style=none] (25) at (1.5, -1.75) {};
		\node [style=none] (26) at (-1.5, -1) {};
		\node [style=none] () at (2.5, -0.7) {=};
		\node [style=none] () at (3.3, -0.7) {}; 
	\end{pgfonlayer}
	\begin{pgfonlayer}{edgelayer}
		\draw (8.center) to (3.center);
		\draw [bend left, looseness=1] (9.center) to (4.center);
		\draw [bend right, looseness=1] (11.center) to (5.center);
		\draw (12.center) to (6.center);
		\draw (11.center) to (14.center);
		\draw [bend right, looseness=1] (14.center) to (16.center);
		\draw [bend right, looseness=1] (15.center) to (13.center);
		\draw (13.center) to (9.center);
		\draw (3.center) to (17.center);
		\draw (5.center) to (18.center);
		\draw (4.center) to (19.center);
		\draw (6.center) to (20.center);
		\draw (17.center) to (26.center);
		\draw [bend right=45] (26.center) to (22.center);
	    \draw [bend right=45] (18.center) to (21.center);
	    \draw [bend left=45] (19.center) to (21.center);
	    \draw (20.center) to (25.center);
	    \draw [bend left=45] (21.center) to (22.center);
	    \draw [bend left=45] (25.center) to (23.center);
	    \draw [bend right=45] (22.center) to (23.center);
		\draw (23.center) to (24.center);
	\end{pgfonlayer}
    \end{tikzpicture}
    \begin{tikzpicture}
	\begin{pgfonlayer}{nodelayer}
		\node [style=construct] (3) at (-1.5, 0.75) {$i_1$};
		\node [style=construct] (4) at (0.5, 0.75) {$i_2$};
		\node [style=construct] (5) at (-0.5, 0.75) {$i_1$};
		\node [style=construct] (6) at (1.5, 0.75) {$i_2$};
		\node [style=construct] (17) at (-1.5, -0.15) {$\phi$};
		\node [style=construct] (18) at (-0.5, -0.15) {$\phi$};
		\node [style=construct] (19) at (0.5, -0.15) {$\phi$};
		\node [style=construct] (20) at (1.5, -0.15) {$\phi$};
		\node [style=construct] (21) at (-1, -1) {$\cup$};
		\node [style=construct] (22) at (1, -1) {$\cup$};
		\node [style=construct] (23) at (0, -2) {$\cup$};
		\node [style=none] (8) at (-1.5, 2) {};
		\node [style=none] (9) at (0.25, 1.25) {};
		\node [style=none] (11) at (-0.25, 1.25) {};
		\node [style=none] (12) at (1.5, 2) {};
		\node [style=none] (13) at (-0.25, 1.5) {};
		\node [style=none] (14) at (0.25, 1.5) {};
		\node [style=none] (15) at (-0.5, 2) {};
		\node [style=none] (16) at (0.5, 2) {};
		\node [style=none] (24) at (0, -3) {};
	\end{pgfonlayer}
	\begin{pgfonlayer}{edgelayer}
		\draw (8.center) to (3.center);
		\draw [bend left, looseness=1] (9.center) to (4.center);
		\draw [bend right, looseness=1] (11.center) to (5.center);
		\draw (12.center) to (6.center);
		\draw (11.center) to (14.center);
		\draw [bend right, looseness=1] (14.center) to (16.center);
		\draw [bend right, looseness=1] (15.center) to (13.center);
		\draw (13.center) to (9.center);
		\draw (3.center) to (17.center);
		\draw (5.center) to (18.center);
		\draw (4.center) to (19.center);
		\draw (6.center) to (20.center);
		\draw [bend right] (17.center) to (21.center);
		\draw [bend left] (18.center) to (21.center);
		\draw [bend right] (19.center) to (22.center);
		\draw [bend left] (20.center) to (22.center);
		\draw [bend right] (21.center) to (23.center);
		\draw [bend left] (22.center) to (23.center);
		\draw (23.center) to (24.center);
	\end{pgfonlayer}
    \end{tikzpicture}
    \]\[
    \begin{tikzpicture}
	\begin{pgfonlayer}{nodelayer}
		\node [style=construct] (3) at (-1.5, 0.75) {$i_1$};
		\node [style=construct] (4) at (0.5, 0.75) {$i_2$};
		\node [style=construct] (5) at (-0.5, 0.75) {$i_1$};
		\node [style=construct] (6) at (1.5, 0.75) {$i_2$};
		\node [style=construct] (17) at (-1, 0) {$\gamma$};
		\node [style=construct] (18) at (1, 0) {$\gamma$};
		\node [style=construct] (19) at (-1, -1) {$\phi$};
		\node [style=construct] (20) at (1, -1) {$\phi$};
		\node [style=construct] (21) at (0, -2) {$\cup$};
		\node [style=none] (22) at (0, -3) {};
		\node [style=none] (8) at (-1.5, 2) {};
		\node [style=none] (9) at (0.25, 1.25) {};
		\node [style=none] (11) at (-0.25, 1.25) {};
		\node [style=none] (12) at (1.5, 2) {};
		\node [style=none] (13) at (-0.25, 1.5) {};
		\node [style=none] (14) at (0.25, 1.5) {};
		\node [style=none] (15) at (-0.5, 2) {};
		\node [style=none] (16) at (0.5, 2) {};
		\node [style=none] () at (-2.5, -1) {=};
		\node [style=none] () at (-2.5, -0.6) {(\ref{phiishom})};
		\node [style=none] () at (2.5, -1) {=};
		\node [style=none] () at (2.5, -0.6) {(\ref{isarehoms})};
		\node [style=none] () at (3.3, -1) {};
	\end{pgfonlayer}
	\begin{pgfonlayer}{edgelayer}
		\draw (8.center) to (3.center);
		\draw [bend left, looseness=1] (9.center) to (4.center);
		\draw [bend right, looseness=1] (11.center) to (5.center);
		\draw (12.center) to (6.center);
		\draw (11.center) to (14.center);
		\draw [bend right, looseness=1] (14.center) to (16.center);
		\draw [bend right, looseness=1] (15.center) to (13.center);
		\draw (13.center) to (9.center);
		\draw [bend right] (3.center) to (17.center);
		\draw [bend left] (5.center) to (17.center);
		\draw [bend right] (4.center) to (18.center);
		\draw [bend left] (6.center) to (18.center);
		\draw (18.center) to (20.center);
		\draw [bend left](20.center) to (21.center);
		\draw (17.center) to (19.center);
		\draw [bend right](19.center) to (21.center);
		\draw (21.center) to (22.center);
	\end{pgfonlayer}
    \end{tikzpicture}
    \begin{tikzpicture}
	\begin{pgfonlayer}{nodelayer}
		\node [style=construct] (0) at (0, -2.7) {$\sqcup$};
		\node [style=construct] (1) at (-1, 0) {$\cup$};
		\node [style=construct] (2) at (1, 0) {$\cup$};
		\node [style=construct] (17) at (-1, -0.9) {$i_1$};
		\node [style=construct] (18) at (-1, -1.8) {$\phi$};
		\node [style=construct] (19) at (1, -0.9) {$i_2$};
		\node [style=construct] (20) at (1, -1.8) {$\phi$};
		\node [style=none] (3) at (-1.5, 0.25) {};
		\node [style=none] (4) at (0.5, 0.25) {};
		\node [style=none] (5) at (-0.5, 0.25) {};
		\node [style=none] (6) at (1.5, 0.25) {};
		\node [style=none] (7) at (0, -3.6) {};
		\node [style=none] (8) at (-1.5, 1.5) {};
		\node [style=none] (9) at (0.25, 0.75) {};
		\node [style=none] (11) at (-0.25, 0.75) {};
		\node [style=none] (12) at (1.5, 1.5) {};
		\node [style=none] (13) at (-0.25, 1) {};
		\node [style=none] (14) at (0.25, 1) {};
		\node [style=none] (15) at (-0.5, 1.5) {};
		\node [style=none] (16) at (0.5, 1.5) {};
		\node [style=none] () at (2.5, -1.6) {=};
		\node [style=none] () at (2.5, -1.2) {(\ref{def:disjoint})};
		\node [style=none] () at (3.3, 0) {};
	\end{pgfonlayer}
	\begin{pgfonlayer}{edgelayer}
		\draw [bend right=55, looseness=1.25] (3.center) to (1.center);
		\draw [bend right=55, looseness=1.25] (4.center) to (2.center);
		\draw [bend left=55, looseness=1.25] (5.center) to (1.center);
		\draw [bend left=55, looseness=1.25] (6.center) to (2.center);
		\draw (0.center) to (7.center);
		\draw (8.center) to (3.center);
		\draw [bend left, looseness=1] (9.center) to (4.center);
		\draw [bend right, looseness=1] (11.center) to (5.center);
		\draw (12.center) to (6.center);
		\draw (11.center) to (14.center);
		\draw [bend right, looseness=1] (14.center) to (16.center);
		\draw [bend right, looseness=1] (15.center) to (13.center);
		\draw (13.center) to (9.center);
		\draw (1.center) to (17.center);
		\draw (2.center) to (19.center);
		\draw (17.center) to (18.center);
		\draw (19.center) to (20.center);
		\draw [bend right] (18.center) to (0.center);
		\draw [bend left] (20.center) to (0.center);
	\end{pgfonlayer}
    \end{tikzpicture}
    \begin{tikzpicture}
	\begin{pgfonlayer}{nodelayer}
		\node [style=construct] (0) at (0, -1) {$\sqcup$};
		\node [style=construct] (1) at (-1, 0) {$\cup$};
		\node [style=construct] (2) at (1, 0) {$\cup$};
		\node [style=none] (3) at (-1.5, 0.25) {};
		\node [style=none] (4) at (0.5, 0.25) {};
		\node [style=none] (5) at (-0.5, 0.25) {};
		\node [style=none] (6) at (1.5, 0.25) {};
		\node [style=none] (7) at (0, -2) {};
		\node [style=none] (8) at (-1.5, 1.5) {};
		\node [style=none] (9) at (0.25, 0.75) {};
		\node [style=none] (11) at (-0.25, 0.75) {};
		\node [style=none] (12) at (1.5, 1.5) {};
		\node [style=none] (13) at (-0.25, 1) {};
		\node [style=none] (14) at (0.25, 1) {};
		\node [style=none] (15) at (-0.5, 1.5) {};
		\node [style=none] (16) at (0.5, 1.5) {};
	\end{pgfonlayer}
	\begin{pgfonlayer}{edgelayer}
		\draw [bend right=55, looseness=1.25] (3.center) to (1.center);
		\draw [bend right=55, looseness=1.25] (4.center) to (2.center);
		\draw [bend left=55, looseness=1.25] (5.center) to (1.center);
		\draw [bend left=55, looseness=1.25] (6.center) to (2.center);
		\draw [bend right = 55] (1.center) to (0.center);
		\draw [bend left = 55] (2.center) to (0.center);
		\draw (0.center) to (7.center);
		\draw (8.center) to (3.center);
		\draw [bend left, looseness=1] (9.center) to (4.center);
		\draw [bend right, looseness=1] (11.center) to (5.center);
		\draw (12.center) to (6.center);
		\draw (11.center) to (14.center);
		\draw [bend right, looseness=1] (14.center) to (16.center);
		\draw [bend right, looseness=1] (15.center) to (13.center);
		\draw (13.center) to (9.center);
	\end{pgfonlayer}
\end{tikzpicture}
    \]
    
    Let $\sigma \in S_m$ and $\tau \in S_n$. Then 
    \begin{align*}
        \GMV(\sigma + \tau)(g \sqcup h)
        &= \GMV(\sigma + \tau)\phi(i_1(g) \cup i_2(h))
        \\&= \GMV(\sigma)\phi(i_1(g)) \cup \GMV(\tau)\phi(i_2(h))
        \\&= \GMV(\sigma(g)) \sqcup \GMV(\tau(h)),
    \end{align*}
    so the following diagram commutes.
    \[
    \begin{tikzcd}
        \GMV(m) \times \GMV(n)
        \arrow[r, "\sqcup"]
        \arrow[d, "\GMV(\sigma) \times \GMV(\tau)", swap]
        &
        \GMV(m+n)
        \arrow[d, "\GMV(\sigma + \tau)"]
        \\
        \GMV(m) \times \GMV(n)
        \arrow[r, "\sqcup", swap]
        &
        \GMV(m+n)
    \end{tikzcd}
    \]
    Thus $\sqcup$ is a natural transformation.
\end{proof}

\begin{proof}[Proof of Theorem \ref{thm:main}]
    Checking the coherence conditions for $\sqcup$ to be a laxator is a straightforward computation.
    Let $f \maps M \to N$. Then define the natural transformation $f_\V \maps \GMV \to \G_{N,\V}$ with components $(f_\V)_n \maps \GMV(n) \to \G_{N,\V}(n)$ given by the universal property. Composition is clearly preserved.
\end{proof}

\begin{thm}
    The functor $\G_{-,\V}$ is left adjoint to $E \maps \NM_\V \to \V$ where $E(F) = F(\mathbf 2)$ for $(F,\Phi) \maps (\S, +) \to (\V, \times)$ a $\V$-network model.
\end{thm}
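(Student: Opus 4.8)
The plan is to exhibit, for each $\V$-network model $(F,\Phi)$, a bijection $\Hom_{\NM_\V}(\GMV, F) \cong \Hom_\V(M, F(\2))$ natural in $M$ and $F$, with the forward map given by $h \mapsto h_\2$. Note first that $E(\GMV) = \GMV(\2)$ is canonically $M$: the vertices of $KG_{2,2}$ are the two-element subsets of $\2$, of which there is exactly one, and there are no edges, so the diagram $D_2 \maps L(\2) \to \V$ defining the Green product is a one-object discrete diagram with value $M$, whence $\GMV(\2) = \colim D_2 \cong M$, naturally in $M$. So the candidate unit is essentially the identity, and it remains to prove the universal property.

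Given $g \maps M \to F(\2)$, I would construct $\tilde g \maps \GMV \to F$ as follows. For a two-element subset $\{p,q\} \subseteq \n$ with $p<q$, set $g^{[n]}_{p,q} = F(\rho)\circ j_n \circ g \maps M \to F(\n)$, where $j_n \maps F(\2) \to F(\n)$ sends $a \mapsto \Phi_{\2,\mathbf{n-2}}(a,\varepsilon)$ ($\varepsilon$ the monoid unit of $F(\mathbf{n-2})$) and $\rho \in S_n$ is any permutation with $\rho(1)=p$, $\rho(2)=q$; naturality of $\Phi$ applied to permutations of the form $1_\2 + \pi$, together with the fact that $F$ of a permutation preserves monoid units, shows this is independent of the choice of $\rho$. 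Because $\Phi$ is a monoid homomorphism, two elements of $F(\n)$ placed on disjoint pairs of vertices commute — this is precisely the partial commutativity of the constituent monoids of a network model observed in Section~\ref{sec:netmods} — so for disjoint $\{p,q\}$ and $\{r,s\}$ the assignment $(a,b) \mapsto g^{[n]}_{p,q}(a)\cdot g^{[n]}_{r,s}(b)$ is a homomorphism $M \times M \to F(\n)$ restricting to $g^{[n]}_{p,q}$ and $g^{[n]}_{r,s}$ on the two factors. Thus the maps $g^{[n]}_{p,q}$ on vertex objects and these maps on edge objects form a cocone on the diagram $D_n \maps L(\n) \to \V$ defining $\GMV(\n)$, and the universal property of the colimit produces a homomorphism $\tilde g_n \maps \GMV(\n) \to F(\n)$.

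Next I would check that $\tilde g$ is a morphism of $\V$-network models. Equivariance (naturality of $\tilde g$ in $\n \in \S$) reduces, on each component, to the identity $F(\sigma)\circ g^{[n]}_{p,q} = g^{[n]}_{\sigma(p),\sigma(q)}$, again by naturality of $\Phi$, and then follows on all of $\GMV(\n)$ by the universal property. Compatibility with the laxators is the equation $\tilde g_{\m+\n}\circ\sqcup = \Phi_{\m,\n}\circ(\tilde g_\m \times \tilde g_\n)$; both sides are monoid homomorphisms out of $\GMV(\m)\times\GMV(\n)$, so it suffices to compare them on the generators $(x,\varepsilon)$ and $(\varepsilon,y)$ with $x,y$ in component copies of $M$. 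Using the explicit form of $\sqcup$ from the preceding lemma, namely $\sqcup = \phi\circ(i_1 p_1 \cup i_2 p_2)$, so that $\sqcup(x,\varepsilon) = \phi(i_1 x)$ is the image of $x$ under the Kneser embedding $KG_{m,2}\hookrightarrow KG_{m+n,2}$, this becomes a diagram chase in $F$ using the associativity, symmetry and naturality coherences of the lax symmetric monoidal structure (and the nullary coherence, which is trivial since $\GMV(\mathbf 0)$ is the zero object). I expect this coherence bookkeeping — in particular, matching the index shifts coming from the Kneser laxator $\Psi$ with those coming from $\Phi$ — to be the main obstacle; everything else is formal.

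Finally, for uniqueness: if $h \maps \GMV \to F$ is any $\NM_\V$-morphism with $h_\2 = g$ under $\GMV(\2)\cong M$, then the element of the component $M_{\{p,q\}}\subseteq\GMV(\n)$ corresponding to $x \in M$ is $\GMV(\rho)(x\sqcup\varepsilon)$ with $\rho(1)=p,\rho(2)=q$, so naturality and monoidality of $h$, plus $h_\2 = g$, force $h_\n$ to equal $F(\rho)\circ\Phi_{\2,\mathbf{n-2}}(-,\varepsilon)\circ g = g^{[n]}_{p,q}$ on that component; since the components generate $\GMV(\n)$ and $h_\n$ is a homomorphism, $h = \tilde g$. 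Hence $h \mapsto h_\2$ is a bijection $\Hom_{\NM_\V}(\GMV,F) \cong \Hom_\V(M,F(\2))$ with inverse $g \mapsto \tilde g$; naturality in $M$ and $F$ is immediate from the constructions (using $(\G_{f,\V})_\2 = f$ under the identifications, and uniqueness), which gives the adjunction $\G_{-,\V} \dashv E$.
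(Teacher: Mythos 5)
Your proposal is correct and follows essentially the same route as the paper: the paper phrases the adjunction via unit and counit, with the counit component on $F(\n)$ built from exactly your maps $g^{[n]}_{p,q}$ (there called $c_{i,j} = F((1\;i)(2\;j))\circ\Phi_{\2,\n-\2}(-,e)$) and the edge-object maps induced by the partial commutativity of a network model, then invoking the universal property of the Green-product colimit. Your hom-set-bijection presentation is just an equivalent packaging of the same construction, and if anything spells out more of the verifications (choice-independence of $\rho$, equivariance, laxator compatibility, uniqueness) that the paper declares routine.
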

Because of this, we call $\G_{M,\V}$ the \define{free $\V$-valued network model on the monoid $M$} or the \define{free $\V$ network model on $M$}.

\begin{proof}
    By construction, $\G_{M,\V}(\2) = M$, so let the unit $\eta = 1_{1_\V} \maps 1_\V \to \G_{-,\V}(\2)$. 
    
    We use the universal property of $\GMV$ to construct the counit. We define a map $F(\2) \to F(\n)$ for each vertex in $\KG_{n,2}$, and a map $F(\2) \times F(\2) \to F(\n)$ for each edge in $\KG_{n,2}$. 
    
    If $i,j \leq n$, then $F((1\; i)(2\; j)) \maps F(n) \to F(n)$. If $e$ is the unit of the monoid $F (\mathbf{n - 2} )$, and $m \in F(\2)$, then $\Phi_{\2, \n - \2} (m, e) \in F(n)$. Define maps $c_{i,j} \maps F(\2) \to F(\n)$ by 
    \[
        c_{i, j} = F((1\; i)(2\; j)) (\Phi_{\2, \n - \2}(m, e)).
    \]
    The intuition here is that $m$ is a value on one edge of the graph, and $e$ is a graph with $n-2$ vertices and no edges. Then $\Phi(m,e)$ is the graph with $n$ vertices, and just one $m$-valued edge between vertices $1$ and $2$. Then the permutation $(1\; i)(2\; j)$ permutes this one-edge graph to put $m$ between vertex $i$ and vertex $j$. So the map $c_{i,j}$ places the one-edge monoid $M$ at the $i,j$-position in the $n$-vertex monoid.
    
    Define maps $c_{i,j,p,q} \maps F(\2) \times F(\2) \to F(\n)$ by $c_{i,j,p,q} (\m,\m') = c_{i,j} (\m) c_{p,q} (\m')$. The second gives a monoid homomorphism precisely because $(F,\Phi)$ is a network model. 
    
    Then we get a map $(\epsilon_F)_\n \maps \G_{F(\2),\V}(\n) \to F(\n)$ by universal property, which gives a monoidal natural transformation automatically. That these maps form the components of a natural transformation can be seen by a routine computation.
    
    Notice that 
    \begin{align*}
        (\epsilon \G_{-,\V})_M = \epsilon_{\G_{M,\V}} = 1_{\G_{M,\V}},\\
        (\G_{-,\V} \eta)_M = \G_{1_M,\V} = 1_{\G_{M,\V}},\\
        (E \epsilon)_F = E(\epsilon_F) = (\epsilon_F)_2 = 1_{F(2)},\\
        (\eta E)_F = \eta_{F(2)} = 1_{F(2)}.
    \end{align*}
    Thus, checking that the snake equations hold is routine. 
\end{proof}

\begin{expl}
\label{ex:oldnew}
    In $\CMon$, products and coproducts are isomorphic. In particular, for a commutative monoid $M$, $\G_{M,\CMon} \cong \G_M$.
\end{expl}

Note that this does not indicate that varietal network models completely encompass ordinary network models. If $M$ is a noncommutative monoid,then $\G_{M, \CMon}$ is not defined, but $\G_M$ is.

\section{Commitment Networks}
\label{sec:commitment}

The motivating example of network models in general is $\SG$, the network model of simple graphs. By Example \ref{ex:oldnew}, this network model is an example of the main construction of this paper, $\SG = \G_{\B,\CMon}$. The boolean monoid is not only an object in $\CMon$, it is also an object in $\GMon$, the variety of graphic monoids. Then we can consider the network models $\G_{\B,\Mon}$ and $\G_{\B,\GMon}$.

\begin{expl}
    Elements of the monoid $\G_{\B,\Mon}(n)$ are words $e_{p_1,q_1} \dots e_{p_k,q_k}$. These words are interpreted as graphs with edges that look like they were built with popsicle sticks, and if two edges lie directly on top of each other, they are identified.
    Besides that relation, you can stack edges as high as you want by placing them between different pairs of vertices, but sharing one vertex.  
\end{expl}

There are networks one could imagine building with this popsicle stick intuition which are not allowed by this formalism. For instance, consider a network with three nodes and an edge for each pair of nodes, each overlapping exactly one of its neighbors, forming an Escher-esque ever-ascending staircase. This sort of network is not allowed by the formalism, since networks are actually equivalence classes of words, where letters have a definite position relative to each other. This is an important feature for this network model as it is necessary to guarantee that the procedure in the following example is well-defined, giving an algebra of the related network operad. What this means in terms of popsicle stick intuition is that allowed networks are built by placing popsicle sticks one at a time. 

\begin{expl}
    Elements of the $\G_{\B,\GMon}(n)$ are similar to those in the previous example, except that they must obey the graphic identity, $xyx = xy$ for all $x,y \in \G_{\B,\GMon}(n)$. What this means in the graphical interpretation is that all edges can be identified with the lowest occuring instance of an edge on the same vertex pair. This means that these networks in reduced form have at most as many edges as the complete simple graph with the same number of edges. Essentially these networks are simple graphs with a partial order on the edges which respects disjointness of edges.
\end{expl}

The networks in the previous example have exactly what we need in a network model to realize networks of bounded degree as an algebra of a network operad.

\begin{expl}[\bf Networks of bounded degree, revisited]
    The \define{degree} of a vertex in a simple graph is the number of edges in the graph which contain that vertex. 
    For $k \in \N$, we say that a simple graph is \define{$k$-bounded} if all vertices have degree less than or equal to $k$. Then we can consider the set $B_k(n)$ of $k$-bounded simple graphs.
    We can define an action of $\G_{\B, \GMon}(n)$ on $B_k(n)$ in the following way. 
    Let $g = e_1 \dots e_l \in \G_{\B, \GMon}(n)$ and $h \in B_k(n)$. 
    Choose a graph $h' \in \G_{\B, \GMon}(n)$ which has the same edges as $h$. 
    Define $h_0 = h'$, then define $h_i = h_{i-1}e_i$ if that is $k$-bounded, else $h_i= h_{i-1}$. 
    Let $hg$ denote $h_l$, which is a $k$-bounded element of $\G_{\B, \GMon}(n)$. 
    Let $\G^k_{\B, \GMon}(n)$ denote the set of $k$-bounded elements of $\G_{\B, \GMon}(n)$. 
    There is a function $s \maps \G^k_{\B, \GMon}(n) \to B_k(n)$. 
    So we define $h g$ to be $s(h_l)$.
    This is independent of the choice of $h'$ and defines an action of $\G_{\B,\GMon}$ on $B_k(n)$.
    
    The networks in Question \ref{question} can be represented by simple graphs with vertex degrees bounded by $k$. Then $B_k(n)$ gives an algebra of the operad $\O_{\B,\GMon}$.
\end{expl}

This resolves the conflict encountered in Question \ref{question}. Ordinary network models could not record the order in which edges were added to a network, which was necessary to define a systematic way of attempting to add new connections to a network which has degree limitations on each vertex. 

\section*{Acknowledgements}

The author would like to thank John Baez, John Foley, Christina Vasilakopoulou, Daniel Cicala, Jade Master, Christian Williams, and the anonymous referees for indispensable conversations, corrections, and suggestions. 

This work was supported by CASCADE Subcontract 6G09-UCR as part of the DARPA Complex Adaptive System Composition and Design Environment project. 

\bibliographystyle{plain}
\bibliography{biblio.bib}

\end{document}